\documentclass[a4paper,english,cleveref,autoref,thm-restate,nolinenumbers]{lipics-v2021}
\usepackage[disableredefinitions]{complexity}
\usepackage{cite}
\usepackage{polski}
\usepackage{todonotes}
\hideLIPIcs

\title{Navigating Posets with Few Maps}

\author{Stefan Felsner}{Technische Universität Berlin, Germany}{felsner@math.tu-berlin.de}{https://orcid.org/0000-0002-6150-1998}{}

\author{J\k{e}drzej Hodor}{Theoretical Computer Science Department, 
Faculty of Mathematics and Computer Science and  Doctoral School of Exact and Natural Sciences, Jagiellonian University, Krak\'ow, Poland \and \url{https://sites.google.com/view/jedrzej-hodor}}{jedrzej.hodor@gmail.com}{https://orcid.org/0000-0002-2564-7121}{}

\author{Giacomo Ortali}{Università Telematica Internazionale Uninettuno, Rome, Italy}{giacomo.ortali@uninettunouniversity.net}{https://orcid.org/0000-0002-4481-698X}{}

\author{Alexander Wolff}{Universität Würzburg, Germany \and \url{https://www.informatik.uni-wuerzburg.de/en/algo/team/wolff-alexander/}}{}{https://orcid.org/0000-0001-5872-718X}{}

\authorrunning{S.~Felsner, J.~Hodor, G.~Ortali, and A.~Wolff}

\ccsdesc[500]{Mathematics of computing~Graph theory}

\keywords{Poset, dimension, width, atlas thickness, mapability, NP-hard, fixed-parameter tractable}

\Copyright{Stefan Felsner, J\k{e}drzej Hodor, Giacomo Ortali, Alexander Wolff}

\acknowledgements{This work was started at the workshop Homonolo 2024
  in Nov\'a Louka.  We thank the organizers and the other
  participants; in particular Bartosz Podkanowicz and Felix
  Schr\"oder.
  J.~Hodor is supported by the National Science Center of Poland under grant UMO-2022/47/B/ST6/02837 within the OPUS 24 program.
  }

\DeclareMathOperator{\width}{width}

\DeclareMathOperator{\Max}{Max}
\DeclareMathOperator{\dmap}{dmap}
\DeclareMathOperator{\at}{at}
\DeclareMathOperator{\suc}{succ}

\definecolor{dark blue}{rgb}{0.121,0.47,0.705}

\let\emph\relax\DeclareTextFontCommand{\emph}{\color{dark blue}\em}

\begin{document}

\maketitle

\begin{abstract}
  We study two new parameters for finite posets motivated by the
  problem of efficiently determining the set of successors of a given
  element.  A \emph{plane map} of a poset~$P=(X,\leq)$ is an injective
  mapping of~$X$ into the Cartesian plane $\mathbb{R}^2$.  Given two
  different points~$a$ and~$b$ in the plane, we say that $b$
  \emph{dominates}~$a$ if $a<b$ coordinatewise.  We say that an
  element $x$ of $P$ is \emph{tight} in a plane map~$\mu$ if the
  following holds: $x<y$ in $P$ if and only if $\mu(y)$ dominates
  $\mu(x)$.  Note that, by definition, every 2-dimensional poset
  admits a map such that every element of the poset is tight.  For any
  poset~$P$, we define the \emph{mapability} of~$P$, $\dmap(P)$, to be
  the maximum number of elements that are tight in a single map, and
  we define the \emph{atlas thickness} of~$P$, $\at(P)$, to be the
  size of the smallest collection of maps such that every element is
  tight in at least one map of the collection.

  We relate these parameters to the classical notions of dimension and
  width: for every poset~$P$, we show that
  $\dim(P) \le 2\at(P) \le \width(P)+1$.  On the other hand, there
  exists a sequence of posets~$(P_n)_{n \ge 1}$ such that the atlas
  thickness of $P_n$ is doubly exponential in the dimension of~$P_n$.

  On the computational side, we prove that it is NP-complete, for a
  given poset~$P$, to compute the mapability of~$P$ and to decide
  whether $\at(P) \le 2$.  In contrast to the latter, we show that
  computing the mapability of a poset is fixed-parameter tractable
  with respect to the natural parameter.

\end{abstract}

\section{Introduction}

In combinatorics and in computer science there is interest in
compact encodings of posets such that the presence of an order
relation between two elements can be tested efficiently.
We propose and study a new idea for such an encoding. The encoding
consists of an atlas of pages which show the elements of the poset on
plane maps. We aim for a small number of pages such that, for each element~$x$, there is a page which shows all relations $x \leq y$.

We assume familiarity with basic notions of the theory of \emph{partially ordered sets} (posets) as used, e.g., in
West's book {\em Combinatorial Mathematics}~\cite{W21}.
Due to their relevance to our topic, we nevertheless recall the definition of dimension and some related concepts.
A \emph{realizer} of a poset~$P$ is a collection $\cal R$ of linear extensions of $P$ such that, for every incomparable pair of elements~$x$ and~$y$ in~$P$, there are $L$ and $L'$ in $\cal R$ such that $x <_L y$ and $y <_{L'} x$.
This is equivalent to saying that $P$ is the intersection of the linear extensions in~$\cal R$.
The \emph{dimension} of a poset~$P$, denoted \emph{$\dim(P)$},
is the minimum cardinality of a realizer of~$P$.
It is well known that $\dim(P)$ also equals the minimum $k$ such that
$P$ is a suborder of $\mathbb{R}^k$ with the \emph{product order}, also known as
\emph{dominance order $\le_\mathrm{dom}$}, where $x \le_\mathrm{dom} y$ if and only if $x_i \leq y_i$ for each $i \in \{1,\ldots,k\}$.
In a poset~$P$, a \emph{chain} is a subset of~$P$ of pairwise comparable elements (that is, they are linearly ordered), 
whereas an \emph{antichain} is a subset of~$P$ of pairwise incomparable elements. 
The \emph{width} of a poset~$P$, $\width(P)$, is the size of a largest antichain in~$P$.

A common motivation for studying the dimension of posets is that a
realizer of size~$k$ for a poset of size~$n$ provides a data structure of size $\tilde{\mathcal{O}}(nk)$ that allows to
determine the comparability status of a pair $(x,y)$ with $k$ lookup
operations. To improve the size of the data structure, Ne\v{s}et\v{r}il and
Pudl\'ak~\cite{NP89} introduced the concept of \emph{Boolean
  dimension}. This provides a data structure that in general is of smaller
size and requires fewer lookup operations but may need a long computation to
determine the comparability status of a pair.

In this paper, we go the other direction.  We are willing to use more storage
but for a given element $x$ of a poset $P$, we want to find the set \emph{$\suc_P[x]$} $=\{ y \in P : x \leq_P y \}$  quickly. 

Here is a first approach.  Let $P$ be a poset, and let $\cal T$ be a set of permutations of its elements such that, for
every~$x$ in~$P$, there is a $\pi$ in~$\cal T$ such that
$\suc_P[x] = \{ y \in P : x \leq_\pi y \}$. In this case we say that $x$ is \emph{tight} in $\pi$. How many permutations are needed such that all elements of $P$ are
tight in at least one of them?  The question has a simple answer. Observe that
if $x$ and $y$ are incomparable, then at most one of $x$ and $y$ can be tight
in a single permutation. Hence, the tight elements of a permutation form a
chain and chains of tight elements of all permutations in $\cal T$ form a
chain cover of $P$.  From Dilworth's theorem \cite[p.~546]{W21}, we therefore get the lower bound
$|{\cal T}| \geq \width(P)$.  Since, for every chain~$C$ of~$P$, there is a
linear extension~$L$ such that all elements of~$C$ are tight in~$L$, we see
that $\width(P)$ is the minimum size of a family $\cal T$ and that indeed it
is enough to use linear extensions in the family. A drawback of this approach is that it
would use $n$ linear extensions just to get all elements of a size-$n$ antichain tight.

This motivates our second approach.  A \emph{(plane) map} for a
poset~$P$ is an injective mapping~$\mu$ of the elements of~$P$ into the Cartesian plane~$\mathbb{R}^2$.  
We use plane maps because planar point sets are easier to grasp for humans than point sets in
higher-dimensional spaces.

An element~$x$ of~$P$ is \emph{tight} on a map $\mu$ if
$\suc_P[x] = \{ y \in P : \mu(x) \le_\mathrm{dom} \mu(y) \}$.  
For example, the element $b$ of the poset~$Q$ shown in
\cref{fig:intro-a} is tight on the map in \cref{fig:intro-c} but not
on the map in \cref{fig:intro-b}.
Note that there exists a mapping of the elements of a poset $P$ into $\mathbb{R}^k$ such that every element is tight if and only if the dimension of $P$ is at most $k$.
We are interested in the following two parameters of a poset~$P$:
\begin{itemize}
\item The \emph{(dominance) mapability} of $P$, denoted \emph{$\dmap(P)$}, is
  the maximum number of tight elements on a single plane map for~$P$.
\item Let an \emph{atlas} for~$P$ be a collection of plane maps
  for~$P$ such that every element of~$P$ is tight on at least one of
  the maps.  Then the \emph{atlas thickness} of $P$, denoted \emph{$\at(P)$},
  is the minimum number of maps that together form an atlas for~$P$.
\end{itemize}
We refer to \cref{fig:intro}.  \cref{fig:intro-a} depicts a poset~$Q$.
\cref{fig:intro-b,fig:intro-c} show a collection of two maps where
every element of $Q$ is tight on at least one map, since elements
$\{a,c,ab,ac,bc\}$ are tight in the former and $b$ is tight in the
latter. As we are going to observe later, $Q$ is the crown poset,
which has dimension~$3$.  Hence, it is not possible to find a map
where more than five elements of~$Q$ are tight, or a collection of
maps for~$Q$ with fewer than two maps.  This implies that $Q$ has
mapability~$5$ and atlas thickness~$2$.

Note that, by definition, every 2-dimensional poset admits a single
map such that every element of the poset is tight.

\begin{figure}[tb]
\centering
    \begin{subfigure}{.21\textwidth}
        \centering
        \includegraphics[page=5]{poset.pdf}
        \subcaption{}
        \label{fig:intro-a}
    \end{subfigure}
    \hfil
    \begin{subfigure}{.21\textwidth}
        \centering
        \includegraphics[page=6]{poset.pdf}
        \subcaption{}
        \label{fig:intro-b}
    \end{subfigure}
    \hfil
    \begin{subfigure}{.21\textwidth}
        \centering
        \includegraphics[page=7]{poset.pdf}
        \subcaption{}
        \label{fig:intro-c}
    \end{subfigure}
    \caption{(a) The so-called crown poset $Q$. (b)~A map of $Q$ where
      all elements but $b$ are tight. (c)~A map of $Q$ where $b$ is
      tight.}
    \label{fig:intro}
\end{figure} 

Now that we have introduced the notion of an atlas, we briefly return
to our motivation.  Given an atlas~$A$ for~$P$ and an element~$x$
of~$P$, we can use a simple look-up table to determine the map in~$A$
on which~$x$ is tight.  If, for each map~$\mu$ in~$A$, we store the
points in $\mu(P)$ in a separate priority search tree~$T_\mu$, we can
query~$T_\mu$ with the quadrant
$[\mu(x)_1,\infty]\times[\mu(x)_2,\infty]$ (see paragraph ``Notation''
on page~\pageref{notation}) to determine the set $\suc_P[x]$ in
$O(|\suc_P[x]|+\log |P|)$ time~\cite{McCreight-pst-SICOMP85}.  The
total space consumption of the priority search trees is
$O(\at(P) \cdot |P|)$; the total preprocessing time for setting up the
trees is their space consumption times a factor of $O(\log |P|)$.

While our definition requires each map in~$A$ to define locations for
every element in~$P$, this is not needed if we just want to solve the
above query problem.  For example, in the map depicted in
\cref{fig:intro-c}, there is no need to store the points in the gray
circle.  Hence, the total space consumption is just
$O(\sum_{x \in P} |\suc_P[x]|)$, which is the number of pairs of
comparable elements in~$P$.
  
\subparagraph{Our contribution.}

We relate the atlas thickness of a poset to its dimension and width;
see \Cref{sec:comparison} for the proof of the following theorem.

\begin{restatable}{theorem}{thmdimlepw}\label{thm:dim-le-2pw}
  For every poset $P$,
  $\dim(P)\le 2\at(P) \leq \width(P)+(\width(P) \bmod 2)$.
\end{restatable}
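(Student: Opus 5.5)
We prove the two inequalities separately: the left one by turning an atlas into a realizer, the right one by turning a chain partition into an atlas.

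\textbf{The bound $\dim(P)\le 2\at(P)$.} Let $\mu_1,\dots,\mu_k$ be an atlas for $P$ with $k=\at(P)$. We build $2k$ linear extensions $L_i^j$, one for each map $i\in\{1,\dots,k\}$ and coordinate $j\in\{1,2\}$, and show that they form a realizer.

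Fix $i$ and $j$, and let $\mu=\mu_i$. Consider the set $S_i^j$ of all pairs $(y,x)$ such that
\begin{itemize}
\item $x$ is tight in $\mu$,
\item $y\parallel x$, and
\item $\mu(y)_j<\mu(x)_j$.
\end{itemize}
We want a linear extension $L_i^j$ of $P$ that places $y$ below $x$ for every pair in $S_i^j$. Such an extension exists if and only if the relation $\le_P\cup S_i^j$ has no directed cycle. A shortest such cycle uses only pairs from $S_i^j$ joined by comparabilities of $P$. So it is enough to rule out an alternating cycle $(y_1,x_1),\dots,(y_m,x_m)\in S_i^j$ with $x_t\le_P y_{t+1}$ for all $t$, indices taken cyclically.

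Since $x_t$ is tight and $x_t\le_P y_{t+1}$, the point $\mu(y_{t+1})$ dominates or equals $\mu(x_t)$. In particular $\mu(x_t)_j\le\mu(y_{t+1})_j$. Chaining this with $\mu(y_t)_j<\mu(x_t)_j$ gives
\[
\mu(y_1)_j<\mu(x_1)_j\le\mu(y_2)_j<\dots<\mu(y_1)_j,
\]
which is a contradiction. This acyclicity step is the only real content; I would either cite the standard alternating-cycle lemma or spell out the short reduction from directed cycles to alternating cycles.

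Now take an incomparable pair $x\parallel y$, and let $x$ be tight in $\mu_i$. Then $\mu_i(y)$ does not dominate $\mu_i(x)$. Since $\mu_i$ is injective, some coordinate $j$ satisfies $\mu_i(y)_j<\mu_i(x)_j$. Hence $y<x$ in $L_i^j$. Symmetrically, the map in which $y$ is tight gives an extension with $x<y$. So the $2k$ extensions form a realizer, and $\dim(P)\le 2\at(P)$.

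\textbf{The bound $2\at(P)\le\width(P)+(\width(P)\bmod 2)$.} This is equivalent to $\at(P)\le\lceil\width(P)/2\rceil$. By Dilworth's theorem, partition $P$ into $w=\width(P)$ chains and group them into $\lceil w/2\rceil$ pairs, with possibly one chain left on its own. It suffices to find, for two chains $C$ and $D$, a single map on which every element of $C\cup D$ is tight.

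As noted in the introduction, for a chain $C$ there is a linear extension $L_C$ in which every $c\in C$ is tight. That is, the elements after $c$ in $L_C$ are exactly those above $c$ in $P$, so every element not in $\suc_P[c]$ comes before $c$. Take $L_C$ and $L_D$ in this sense and set $\mu(z)=(\mathrm{pos}_{L_C}(z),\mathrm{pos}_{L_D}(z))$. This map is injective because the positions in $L_C$ are distinct.

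Let $c\in C$. If $y\ge_P c$, then $y$ is after $c$ in both $L_C$ and $L_D$, since both are linear extensions of $P$, so $\mu(y)$ dominates $\mu(c)$. If $y\not\ge_P c$, then $y$ precedes $c$ in $L_C$, so $\mu(y)$ does not dominate $\mu(c)$. Hence $c$ is tight, and symmetrically every $d\in D$ is tight. A leftover single chain is handled by the same construction with $D=\emptyset$. This gives an atlas with $\lceil w/2\rceil$ maps.
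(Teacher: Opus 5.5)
Your proof is correct. For the right inequality it follows the paper: \cref{lem:two-chains} builds exactly your two extensions $L_C$ and $L_D$ and takes the map given by their positions. In that construction, each element not in the chain goes into the block after the highest chain element below it. The only difference is that you quote the introduction's remark that such an $L_C$ exists, and this construction is what proves it.

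For the left inequality you take a genuinely different route. The paper first proves \cref{lem:extensions-on-coordinates}: any map can be explicitly redrawn so that both coordinate orders are linear extensions while every tight element stays tight. The $2\at(P)$ coordinate orders of the redrawn maps then form a realizer at once. You skip the redrawing. Instead, for each map and coordinate, you show that all incomparable pairs $(y,x)$ that this coordinate must reverse can be reversed together, using the alternating-cycle criterion and a topological sort. This is shorter and is the standard way such statements are argued in dimension theory.

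Your construction also recovers the paper's lemma. Take the map $z\mapsto(\mathrm{pos}_{L_i^1}(z),\mathrm{pos}_{L_i^2}(z))$. It keeps every element tight on $\mu_i$ tight, because any $y\notin\suc_P[x]$ either satisfies $y<_P x$ or has $(y,x)\in S_i^1\cup S_i^2$. The paper prefers the explicit lemma because it reuses it later: \cref{le:box} needs maps whose coordinate projections are linear extensions.

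A small remark: injectivity of $\mu_i$ is not needed to find a strict coordinate. The relation $\mu_i(x)\not\le_\mathrm{dom}\mu_i(y)$ already gives $\mu_i(y)_j<\mu_i(x)_j$ for some $j$.
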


The inequalities in~\Cref{thm:dim-le-2pw} are tight, however, there
are posets whose atlas thickness is much larger than their dimension.
Note also that antichains have atlas thickness~$1$ and arbitrary
width.  The next two statements are proved in~\Cref{sec:lower}.
\Cref{thm:dim-eq-2at} is witnessed by the family of standard examples,
and~\Cref{thm:pw-2-2-dim} is witnessed by the family of the incidence
posets of complete graphs.

\begin{restatable}{theorem}{thmdimeqat}\label{thm:dim-eq-2at}
    For every positive integer $n$, there is a poset $P_{n}$ with $ \dim(P_{n}) = 2\at(P_{n})=2n$.
\end{restatable}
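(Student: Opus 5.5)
We take $P_n$ to be the standard example $S_{2n}$: minimal elements $a_1,\dots,a_{2n}$, maximal elements $b_1,\dots,b_{2n}$, with $a_i<b_j$ if and only if $i\neq j$. It is classical that $\dim(S_{2n})=2n$. By \Cref{thm:dim-le-2pw} we then have $2\at(P_n)\ge 2n$, that is, $\at(P_n)\ge n$. So it remains to build an atlas with $n$ maps.

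The elements $b_j$ have no successors other than themselves. They are tight in a map as soon as no other point dominates them, for instance when all $b_j$ lie on a common anti-diagonal above everything else. The real task is to make the minimal elements tight, and we aim to make two of them tight per map. Since $n$ maps then cover $2n$ minimal elements, the maximal elements can be made tight in every map.

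For the pair $\{a_1,a_2\}$ we build one map as follows.
\begin{itemize}
\item Place $b_3,\dots,b_{2n}$ on the anti-diagonal $x+y=10$, with $x$-coordinates in $(2,3)$.
\item Put $b_1$ at $(1,9)$ and $b_2$ at $(9,1)$.
\item Put $a_1$ at $(2,0)$. Its dominance quadrant then contains $b_2$ and all $b_j$ with $j\ge 3$, but not $b_1$, whose $x$-coordinate is $1<2$.
\item Symmetrically, put $a_2$ at $(0,2)$. Its quadrant contains $b_1$ and $b_3,\dots,b_{2n}$, but not $b_2$, whose $y$-coordinate is $1<2$.
\item Place the remaining $a_i$, $i\ge 3$, far away at negative coordinates along an anti-diagonal, for example near $(-100+i,-100-i)$.
\end{itemize}
The far-away $a_i$ need not be tight, but they must not lie in the quadrants of $a_1$, $a_2$ or any $b_j$. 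This holds because all their coordinates are negative, while the quadrants of $a_1$, $a_2$ and the $b_j$ contain only points with nonnegative coordinates.

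It remains to check that the $b_j$ are tight. No point dominates $b_1=(1,9)$ or $b_2=(9,1)$: the other $b_j$ satisfy $x+y=10$ with $x\in(2,3)$, so they have $y<8$, which rules out dominating $b_1$, and $x<3$, which rules out dominating $b_2$. Also, $a_1$ and $a_2$ do not dominate $a_2$ and $a_1$ respectively, since their coordinates are incomparable. Injectivity is clear from the construction.

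Permuting indices gives maps for the pairs $\{a_{2k-1},a_{2k}\}$, $k=1,\dots,n$. This yields an atlas of size $n$, so $\at(P_n)=n$.

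The only delicate step is the single-map construction, in particular checking that each $a_i$ misses exactly one $b$, namely $b_i$. Placing $b_1$ and $b_2$ at the two extreme corners of the staircase, with $a_1$ and $a_2$ just inside those corners, handles this.
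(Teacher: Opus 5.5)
Your proof is correct and follows the paper's argument. You take $P_n=S_{2n}$, get the lower bound $\at(P_n)\ge n$ from $\dim(P)\le 2\at(P)$, and use $n$ maps, each making one pair $a_{2k-1},a_{2k}$ and all $b_j$ tight; your explicit coordinates replace the paper's figure, and in fact all $b_j$, including $b_1=(1,9)$ and $b_2=(9,1)$, lie on $x+y=10$.

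There is one small slip. For $i\ge 100$ the point $(-100+i,-100-i)$ does not have all coordinates negative. Its negative $y$-coordinate still keeps it out of every quadrant you need to avoid, since all of those lie in $y\ge 0$.
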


\begin{restatable}{theorem}{thmpwgedim}\label{thm:pw-2-2-dim}
    There exists a sequence of posets $(P_n)_{n \ge 1}$ such that $\at(P_n) = 2^{2^{\Omega(\dim(P_n))}}$.
\end{restatable}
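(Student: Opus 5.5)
The witness is the family of incidence posets of complete graphs: $P_n$ consists of the vertices and edges of $K_n$, with $v<e$ whenever $v\in e$. The classical result of Dushnik–Miller, Spencer and Hoşten–Morris gives $\dim(P_n)=\Theta(\log\log n)$. So it suffices to show that $\at(P_n)\ge n^{c}$ for some constant $c>0$, or even just $\at(P_n)\ge (\log n)^{\omega(1)}$ with enough room. Indeed, $n=2^{2^{\Omega(\dim(P_n))}}$, so any polynomial lower bound in $n$ translates into $2^{2^{\Omega(\dim)}}$. The plan is to prove $\dmap(P_n)\le O(\sqrt n)$ restricted to vertices, i.e., that a single map makes only few vertices tight. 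Since the edges are maximal, they are tight in any map where nothing else dominates them, which costs nothing. Then $\at(P_n)\ge n/O(\sqrt n)=\Omega(\sqrt n)$.

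The key step is to bound the number of tight vertices in one map $\mu$. Let $T$ be the set of tight vertices. For $u,v\in T$, the edge $uv$ must dominate both $\mu(u)$ and $\mu(v)$. Conversely, for $w\in T\setminus\{u,v\}$, the edge $uv$ must not dominate $\mu(w)$. Suppose $|T|=k$ and order $T$ by $x$-coordinate. By Erdős–Szekeres, the $y$-coordinates contain a monotone subsequence of length about $\sqrt k$. If it is increasing, we obtain $a<b<c$ (in both coordinates) with $\mu(a)\le_{\mathrm{dom}}\mu(b)$. Then the edge $ac$ dominates $\mu(c)\ge\mu(b)$, so it dominates $b$, contradicting the tightness of $b$. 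Hence no three tight vertices form an increasing chain in dominance, and in fact no two: if $\mu(a)\le_{\mathrm{dom}}\mu(b)$, take any third tight vertex $c$. The edge $ac$ dominates $\mu(a)$ but must avoid dominating $\mu(b)$, which is possible. So this case needs more care.

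The main obstacle is therefore the case where the tight vertices form an antichain in the plane, i.e., a decreasing sequence $t_1,\dots,t_k$ (increasing $x$, decreasing $y$). The edge $t_it_j$ with $i<j$ must lie in the quadrant above $\max$ of the two points, namely at $x\ge x(t_j)$ and $y\ge y(t_i)$. It then dominates every $t_\ell$ with $i<\ell<j$, which is forbidden whenever $j\ge i+2$. So among the tight vertices, only pairs adjacent in this order can have their edges placed consistently. With $k\ge3$, the pair $t_1t_3$ already gives a contradiction, and so the decreasing case has $k\le 2$. In the mixed case we apply Erdős–Szekeres, or more simply Dilworth on the dominance order restricted to $T$. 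Both long chains and long antichains are then excluded, which should bound $|T|$ by a constant, giving the much stronger $\at(P_n)=\Omega(n)$. I would carefully redo the chain case: there the offending edge $ac$ dominates $b$ exactly when $\mu(b)\le_{\mathrm{dom}}\mu(ac)$, which is forced if $b$ is dominance-between $a$ and $c$. A chain of length $3$ in $T$ thus yields a contradiction, so $T$ has dominance chains and antichains of length at most $2$, and $|T|\le4$. Therefore $\at(P_n)\ge n/4$, and combined with $\dim(P_n)=O(\log\log n)$ this gives $\at(P_n)=2^{2^{\Omega(\dim(P_n))}}$.
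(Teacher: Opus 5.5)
Your proof is correct and follows essentially the paper's route. Both use the incidence posets of complete graphs with $\dim = O(\log\log n)$, and both bound by a constant the number of vertices tight on one map, via the same key step as the paper's crown lemma: for $t_1,t_2,t_3$ sorted by increasing first and decreasing second coordinate, the point of the edge $t_1t_3$ dominates $t_2$. Your chain case, and the muddled aside about it, can be dropped: if $a$ is tight and $\mu(a)\le_{\mathrm{dom}}\mu(b)$, then $a<b$ in $P_n$, which is impossible for two vertices, so tight vertices are pairwise dominance-incomparable and you get the paper's sharper bound of $2$ rather than $4$ tight vertices per map.
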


The problem of checking whether a given poset has dimension at most $k$ is \NP-complete already for $k = 3$~\cite{Y82}.
Perhaps unsurprisingly, computing the parameters that we have introduced is also computationally hard; see \Cref{sec:hardness}.

\begin{restatable}{theorem}{thmhardnessat}\label{thm:hardness-at}
    It is \NP-complete to test whether the atlas thickness of a poset is at most $2$.
\end{restatable}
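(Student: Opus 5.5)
Membership in \NP{} is immediate: a certificate consists of two maps, each given as a permutation pair (only the relative order of coordinates matters, so we may take coordinates in $\{1,\dots,|P|\}$), together with an assignment of each element to a map on which it is tight; tightness of $x$ on $\mu$ is checked in polynomial time by comparing $\suc_P[x]$ with the dominance quadrant of $\mu(x)$.

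For hardness we reduce from testing whether $\dim(Q)\le 4$. This problem is \NP-complete by Yannakakis~\cite{Y82}, who shows that deciding $\dim\le k$ is \NP-complete for every fixed $k\ge 3$. The reduction rests on two observations. The first is the lower bound in \Cref{thm:dim-le-2pw}: if $\at(P)\le 2$ then $\dim(P)\le 4$. The second is a partial converse. If $\dim(Q)\le 4$ with realizer $L_1,\dots,L_4$, the map $x\mapsto(L_1(x),L_2(x))$ already respects every relation of $Q$ (dominance there contains $\le_Q$), but it is too coarse. A single map can be tight on all elements only if $Q$ is $2$-dimensional, so we must construct $P\supseteq Q$ by adding gadget elements that force the tight sets of the two maps to be a prescribed partition.

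The construction I would try first is $P=Q\oplus$ a gadget, taking two disjoint copies $Q',Q''$ and relating them so that:
\begin{itemize}
\item[(i)] in any $2$-map atlas, all of $Q'$ must be tight on one map and all of $Q''$ on the other;
\item[(ii)] an element of $Q'$ is tight on a map with coordinates $(L_1,L_2)$ only when its successor set is cut out exactly by two linear extensions, and the copy $Q''$ (set above or below $Q'$ in a product arrangement, as with standard examples) supplies through its coordinates the missing comparisons $L_3,L_4$.
\end{itemize}
Concretely, I would encode dimension $4$ as dimension $2$ of a product: place $x'\in Q'$ at $(L_1(x),L_2(x))$ and let the elements of $Q''$ act as witnesses, so that tightness of $x'$ encodes $L_1\cap L_2$ on $\suc[x]$ and the second map encodes $L_3\cap L_4$.

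The forward direction is then verified by explicit maps. For the converse, from an atlas with two maps we extract the four coordinate orders, extend them to linear extensions of $Q$, and show that together they realize $Q$. The gadget must ensure that a pair incomparable in $Q$ is reversed by some coordinate order.

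I expect the main obstacle to be designing the gadget so that tightness really forces the needed reversals. Tightness of $x$ controls only the pairs $(x,y)$, whereas a realizer must reverse every incomparable pair with one coordinate order in each direction. If the direct product approach fails, I would instead reduce from a problem closer to the definition, namely partitioning the elements into two classes, each class simultaneously tight in a $2$-dimensional drawing. A candidate source is $3$-dimensionality via Yannakakis' interval-order / graph-coloring style gadgets, where the two maps correspond to two colour classes.
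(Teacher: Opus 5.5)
Your \NP-membership argument is fine, but the hardness part has a genuine gap. You never give a reduction, and the framework you sketch cannot be completed.

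Condition (i) already contradicts your own observation one sentence earlier. If every element of $Q'$ is tight on one map $\mu$, then $\mu$ restricted to $Q'$ embeds $Q'$ into $(\mathbb{R}^2,\le_\mathrm{dom})$, so it would force $\dim(Q)\le 2$.

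More fundamentally, no reduction that builds $P$ by adding elements to $Q$ (so that $Q$, or a copy of it, is an induced subposet of $P$) can work. Atlas thickness is monotone under induced subposets. If $x\in Q$ is tight on a map $\mu$ of $P$, then $\suc_Q[x]=\suc_P[x]\cap Q=\{y\in Q:\mu(x)\le_\mathrm{dom}\mu(y)\}$, so $x$ is tight on $\mu$ restricted to $Q$. Hence $\at(P)\le 2$ implies $\at(Q)\le 2$.

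However, $\dim(Q)\le 4$ does not imply $\at(Q)\le 2$. The incidence poset $P_{K_5}$ has dimension $4$ (indeed $\dim(P_{K_n})\le 4$ for all $n\le 12$~\cite{HOSTEN1999133}). As in the proof of \Cref{thm:pw-2-2-dim}, no three of its vertices are tight on a common map, so $\at(P_{K_5})\ge 3$. Thus yes-instances of ``$\dim\le 4$'' would be sent to no-instances, and the forward direction fails whatever gadget you choose.

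Your fallback, partitioning the elements into two classes each tight on some map, is just a restatement of the target problem, not a known \NP-hard source problem.

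The missing idea is to pick a source problem whose combinatorics matches single-map tightness exactly. The paper works with incidence posets $P_G$:
\begin{itemize}
\item a vertex set $U$ can be tight on a single map only if $G[U]$ is a linear forest (\Cref{le:dom_to_linearforest}, via \Cref{prop:Gdim2});
\item conversely, for any such $U$ there is a map on which $U$ together with \emph{all} edges of $G$ is tight (\Cref{le:linearforest_to_dom}).
\end{itemize}
The edge elements therefore never constrain the atlas. So $\at(P_G)\le 2$ holds if and only if $V(G)$ can be partitioned into two sets each inducing a linear forest. That problem is \NP-hard by Farrugia's theorem on additive induced-hereditary properties, as observed by Chaplick et al.
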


\begin{restatable}{theorem}{thmhardnessdmap}\label{thm:hardness-dmap}
  It is \NP-complete to decide, given a poset $P$ and a positive
  integer~$k$, whether $\dmap(P) \le k$.  Hence, computing the
  mapability of a poset is \NP-hard.
\end{restatable}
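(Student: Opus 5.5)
The plan has two parts. First I give a combinatorial, polynomially checkable description of which sets can be simultaneously tight. Then I give a reduction from a graph problem in which the number of simultaneously tight elements encodes the answer.

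\textbf{Step 1: combinatorial characterization.} A map $\mu$ is determined, up to the dominance relation it induces, by the two linear orders $L_1,L_2$ of $P$ given by the $x$- and $y$-coordinates; ties can be broken without changing which elements are tight. So a set $T\subseteq P$ is simultaneously tight if and only if there are linear orders $L_1,L_2$ with the following property: for all $x\in T$ and $y\in P$, we have $x<_P y$ if and only if $x<_{L_1}y$ and $x<_{L_2}y$. Given $T$, $L_1$ and $L_2$, this can be verified in polynomial time. Hence the pair $(L_1,L_2)$ is a polynomial certificate for the threshold relation between $\dmap(P)$ and $k$. This certificate is the ingredient I would use for the membership half of the claim.

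\textbf{Step 2: reduction.} I would reduce from \textsc{Independent Set}, in the form ``is $\alpha(G)\le k'$?''. Start from the $2$-dimensional poset that has one element for each vertex and each edge of $G$. Then attach, for each edge $uv$, a small crown-type gadget modelled on the crown~$Q$ of \cref{fig:intro}. The gadget should make the vertex elements $u$ and $v$ behave like the two ``conflicting'' elements of $Q$: in every map at most one of them is tight. At the same time, all gadget elements other than $u$ and $v$ must be tight simultaneously in some map, whatever choice is made at the other edges. I expect the resulting formula to be $\dmap(P_G)=c(G)+\alpha(G)$, where $c(G)$ is an explicit count of the always-tight elements. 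With $k=c(G)+k'$, the question ``$\dmap(P_G)\le k$'' is then equivalent to ``$\alpha(G)\le k'$''.

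\textbf{Step 3: the two directions of the equivalence.}
\begin{itemize}
\item For the ``$\ge$'' direction, take a maximum independent set $I$ and build explicit linear orders $L_1,L_2$. Each vertex in $I$ is placed so that it is tight. Each vertex outside $I$ is placed so that it is dominated incorrectly in exactly the way the crown forces. The gadgets are laid out locally in the two orders.
\item For the ``$\le$'' direction, use the argument for the crown, which has dimension~$3$, applied to each edge gadget. This shows that the two endpoints of an edge cannot both be tight. Hence the tight vertex elements form an independent set.
\end{itemize}

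\textbf{Main obstacle.} The hardest step is the gadget design for Step~3. The gadgets must not interact, so that the local layouts compose into one global pair of linear orders. I would first attempt to chain the gadgets along a fixed vertex ordering, placing the gadget of each edge in a narrow band of both orders, and only then fine-tune so that every non-vertex element stays tight.

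\textbf{Caveat on the direction of the threshold.} The certificate from Step~1 witnesses the lower bound $\dmap(P)\ge k$, not the upper bound $\dmap(P)\le k$. Likewise, the reduction above shows hardness for the complement of \textsc{Independent Set}, which is co\NP-hard. So the argument establishes \NP-hardness of computing $\dmap(P)$. However, it places ``$\dmap(P)\le k$'' in co\NP\ rather than \NP, and the exact wording of \NP-completeness in the statement has to be checked against this.
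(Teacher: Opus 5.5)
The gap is in Step~2, which asks for an edge gadget under which, in every map, at most one of $u$ and $v$ is tight. No such gadget can exist in any poset. By \cref{lem:two-chains}, applied to the singleton chains $C=\{u\}$ and $D=\{v\}$, any two elements are tight together on some map. Concretely, let the first coordinate be a linear extension listing $P\setminus\suc_P[u]$, then $u$, then $\suc_P(u)$, and build the second coordinate the same way from $v$; both $u$ and $v$ are then tight. So pairwise conflicts cannot be encoded. Your ``$\le$'' direction in Step~3, that the tight vertex elements form an independent set, is therefore false as soon as $G$ has an edge, and the identity $\dmap(P_G)=c(G)+\alpha(G)$ has no valid argument. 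This also misreads the crown: in $Q$ any two of $a,b,c$ can be tight together (for example $a$ and $c$ on the map of \cref{fig:intro-b}), and \cref{lem:posetQ} only excludes all three. Obstructions of this kind involve at least three elements, so \textsc{Independent Set} with pairwise gadgets is the wrong source problem.

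The paper uses the bare incidence poset $P_G$, with no gadgets. Note that $P_G$ is not $2$-dimensional unless $G$ is a linear forest (\cref{prop:Gdim2}), so your starting poset is unclear as described. By \cref{le:linearforest_to_dom}, all edge elements can be tight together with any vertex set $U$ for which $G[U]$ is a linear forest. By \cref{le:dom_to_linearforest}, the tight vertex elements of any map induce a linear forest; the excluded configurations are cycles and $3$-stars, which are conflicts among at least three vertices. Hence $\dmap(P_G)$ equals $|E(G)|$ plus the maximum size of a vertex set inducing a linear forest. The reduction is from that problem, which is \NP-hard by Lewis and Yannakakis because being a linear forest is a non-trivial hereditary property.

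Your Step~1 and your caveat on the direction of the threshold are sound. The paper's own argument (membership by verifying a map, plus the equivalence with $\dmap(P_G)\ge k+|E(G)|$) in fact proves \NP-completeness of the ``$\ge k$'' version. The ``$\le k$'' version, as literally stated, is co\NP-complete.
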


\Cref{thm:hardness-at} shows that computing the atlas thickness of a poset is \textsf{para}\NP-hard with respect to the natural parameter.
This is in contrast to the mapability of a poset, which turns out to be fixed-parameter tractable (\FPT) with respect to the natural parameter; see \Cref{se:FPT}.
We prove that the size of each poset is bounded in terms of its mapability.

\begin{restatable}{theorem}{thmsizetodmap}\label{thm:size-dmap}
  There exists a computable function $f$ such that, for every
  poset~$P$, we have $|P| \le f(\dmap(P))$.
\end{restatable}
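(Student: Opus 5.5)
The plan is to bound $|P|$ by peeling $P$ into layers from the top. I will show that the number of layers is at most $k:=\dmap(P)$, and that each layer has size bounded in terms of $k$ and the sizes of the layers above it. Two simple constructions of maps are the tools, and each forces $\dmap(P)$ to be large whenever a layer is too big.

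First, the height is bounded. Let $C$ be a chain of $P$ and let $L$ be a linear extension of $P$ in which every element of $C$ is tight. Such an $L$ exists, as noted in the introduction: list $C$ in order and insert each element of $\suc_P[c]$ after $c$. Map each $z$ to $(L(z),L(z))$ on the diagonal. Then dominance coincides with $\le_L$, so every element of $C$ is tight. Hence $\width$ is not the issue here: the height of $P$ is at most $k$.

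Second, define the \emph{top-rank} $r(x)$ as the number of elements on a longest chain starting at $x$, minus one. Let $X_j=\{x : r(x)=j\}$. Each $X_j$ is an antichain, since $x<y$ implies $r(x)>r(y)$, and $X_j=\emptyset$ for $j\ge k$. I will prove by induction on $j$ that $|X_j|\le g_j$ for computable $g_j=g_j(k)$.

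The key observation is the following map. Suppose $A$ is an antichain whose elements all have the same strict up-set $S=\suc_P[a]\setminus\{a\}$. Place the elements of $A$ on the anti-diagonal at $(i,-i)$ for $i=1,\dots,|A|$. Place the elements of $S$ at distinct points far up and to the right, e.g.\ $(M+t,M+t)$ with $M$ large. Place all remaining elements at distinct points far down and to the left, e.g.\ $(-M-t,-M-t)$. The points of $A$ dominate no other point of $A$. Every point of $S$ dominates all of $A$, and no remaining point dominates any point of $A$. Hence every element of $A$ is tight, and so $|A|\le k$.

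For $j=0$, $X_0=\Max(P)$ is an antichain and each of its elements has empty strict up-set, so $g_0=k$. For $j\ge1$, every strict successor of $x\in X_j$ lies in $X_0\cup\dots\cup X_{j-1}$. This set has size at most $N_j:=\sum_{i<j}g_i$, so there are at most $2^{N_j}$ possible strict up-sets. By pigeonhole, if $|X_j|>k\,2^{N_j}$, then more than $k$ elements of the antichain $X_j$ share the same strict up-set. The map above then contradicts $\dmap(P)=k$. So we may take $g_j=k\,2^{N_j}$ and $f(k)=\sum_{j<k}g_j$, which is clearly computable.

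The only delicate step is checking that the map in the key observation really makes every element of $A$ tight. This needs no other poset element to dominate a point of $A$ accidentally, and the "far away" placement is chosen precisely to rule that out. Everything else is pigeonhole. The resulting bound is a tower of exponentials of height about $k$. This is wasteful, but the theorem only asks for some computable $f$.
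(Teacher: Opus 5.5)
Your proof is correct and follows the paper's argument. Your layers $X_j$ are exactly the paper's top-down canonical antichain partition $A_{j+1}$, the bound $|A|\le k$ for elements sharing a strict up-set is the paper's up-module lemma (\cref{le:box}), and pigeonholing over subsets of the higher layers gives the same tower-type bound. The differences are only in the supporting steps: you prove the up-module bound by a direct anti-diagonal map instead of going through \cref{lem:extensions-on-coordinates}, and you bound the height with a diagonal embedding of a linear extension instead of \cref{lem:two-chains}.
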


This directly yields an \FPT-algorithm with respect to the mapability of the given poset.

\begin{corollary}\label{cor:FPT}
    There exists a computable function $g$ and an algorithm that, given a poset~$P$, returns the value $\dmap(P)$ in time $g(\dmap(P))$. 
\end{corollary}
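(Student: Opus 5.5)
The corollary follows from \cref{thm:size-dmap} by brute force. The algorithm computes $\dmap(P)$ exactly and finishes within a time bound depending only on $\dmap(P)$. The only earlier result it needs is \cref{thm:size-dmap}, which gives a computable $f$ with $|P| \le f(\dmap(P))$. We may assume $f$ is nondecreasing, replacing it by $k \mapsto \max_{i\le k} f(i)$ if necessary, which is still computable.

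The first step is a finite characterization of $\dmap(P)$ for a poset with $n$ elements. A plane map only matters through the relative order of the $x$-coordinates and of the $y$-coordinates of the images. Ties are irrelevant too: perturbing coordinates can always break ties while preserving or improving which elements are tight, because a tie only creates extra weak dominances. To be safe, the enumeration can simply include ties, i.e.\ weak orders on each coordinate, which is still a finite set. So it suffices to enumerate all pairs $(\sigma_1,\sigma_2)$ of weak orders on $X$ such that the resulting points are distinct. For each pair we place $x$ at (rank of $x$ in $\sigma_1$, rank in $\sigma_2$), count the tight elements directly from the definition in $O(n^2)$ time, and take the maximum. Since this maximum ranges over all maps up to order-equivalence, it equals $\dmap(P)$. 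The running time is at most $h(n) = (n!\,2^n)^2\cdot n^{O(1)}$, or with weak orders a similarly bounded computable function of $n$.

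The second step turns a bound in $n=|P|$ into a bound in $\dmap(P)$. The algorithm is given only $P$, not $\dmap(P)$, but it knows $n$ and can just run the enumeration. By \cref{thm:size-dmap}, $n \le f(\dmap(P))$, so the running time is at most $h(n) \le h(f(\dmap(P)))$, where we take $h$ nondecreasing. Setting $g = h\circ f$ gives a computable function. Reading the input, which has size $O(n^2)$, is also dominated by $g(\dmap(P))$.

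The only subtle point is justifying that the finite enumeration captures the supremum over all real maps, and this is the order-type argument in the first step. No real obstacle is expected beyond it.
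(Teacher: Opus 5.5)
Your proposal is correct and is the argument the paper intends: the paper only says that \cref{thm:size-dmap} ``directly yields'' the FPT algorithm, and you supply the details — bound $|P|$ by $f(\dmap(P))$, then brute-force over the finitely many coordinate order types of injective plane maps. Your tie-breaking remark holds only for a suitably chosen perturbation (a tie between a tight $x$ and a successor $y$ of $x$ must be broken with $x$ before $y$), but since you enumerate weak orders anyway, nothing depends on it.
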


\subparagraph{Related work.}

Our work follows a recent trend in graph drawing to visualize a
complex combinatorial or geometric object (say, a graph) by means of a
collection of drawings, each of which includes all parts (say, edges),
but highlights only a subset of them.  For example, Hlin\v{e}n{\'{y}}
and Masa\v{r}{\'{\i}}k~\cite{hm-mucd-GD23} introduced the notion of an
\emph{uncrossed collection} of a graph, that is, a collection of
straight-line drawings of the graph such that every edge of the graph
is uncrossed in at least one drawing in the collection.  They
considered two optimization problems; namely minimizing the size of
the collection (the uncrossed number) and minimizing the total number
of crossings in the collection (the uncrossed crossing number).  They
showed the latter problem \NP-hard but in \FPT\ with respect to the
natural parameter.
Similarly, Anti{\'c}, Liotta, Masa\v{r}\'{\i}k, Ortali, Pfretzschner,
Stumpf, Wolff, and Zink~\cite{almopswz-ucod-WG25} introduced
\emph{unbent collections} for plane 4-graphs, that is, for embedded
planar graphs of vertex degree at most~4.  Such a collection contains
orthogonal drawings of the given graph such that every edge of the
graph is straight in at least one drawing in the collection.  They
also considered the corresponding optimization problems of computing,
for a given plane 4-graph, the unbent number (which they proved to be
always at most~3) and the unbent bend number (for which they showed
\NP-hardness and gave a 3-approximation algorithm).

Less related are storyplans~\cite{BinucciGLLMNS22,BinucciGLLMNS24,FialaFLWZ24} and
graph stories~\cite{BattistaDGGOPT22,BattistaDGGOPT23}, where the drawings in the
collection show only parts of the graph (with a prescribed property
such as planarity) and the order of the drawings is relevant.

\subparagraph{Notation.}\label{notation}

For a positive integer $k$, we use $[k]$ as shorthand for
$\{1,2,\dots,k\}$.  For a poset~$P$, we let \emph{$\Max(P)$} denote
the set of all maximal elements of~$P$, and we let \emph{$|P|$} denote
the number of elements of~$P$.
For a map $\mu$ and an element $x$ of a poset $P$, we write
\emph{$\mu(x)_1$} and \emph{$\mu(x)_2$} for the first and the second
coordinates of $\mu(x) \in \mathbb{R}^2$, respectively.

For a poset $P$, for each element $x$ of $P$, we write $\suc_P[x] =\{ y \in P : x \leq_P y \}$ and $\suc_P( x) =\{ y \in P : x <_P y \}$ to denote, respectively, the \emph{closed} and the \emph{open} set of \emph{successors} of $x$.

The \emph{incidence poset} of a graph $G$, denoted by~\emph{$P_G$}, is
the poset with ground set $V(G) \cup E(G)$ such that, for every two
elements~$x$ and~$y$ of~$P_G$, we have $x < y$ if and only if
$x \in V(G)$, $y \in E(G)$, and $x$ is incident to $y$ in $G$. 
For an example, see \cref{fig:hard-1}.

\begin{figure}[h]
\centering
    \begin{subfigure}{.22\textwidth}
        \centering
        \includegraphics[page=1]{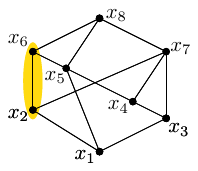}
        \subcaption{\nolinenumbers}
        \label{fig:hard-1-a}
    \end{subfigure}
    \hfil
    \begin{subfigure}{.5\textwidth}
        \centering
        \includegraphics[page=3]{drawdim-hard}
        \subcaption{\nolinenumbers}
        \label{fig:hard-1-b}
    \end{subfigure}
    \caption{(a) A graph $G$ and (b)~the incidence poset~$P_G$ of~$G$,
      represented as a DAG.}
    \label{fig:hard-1}
\end{figure} 

The pieces of text in gray explain statements that may be obvious for
readers who feel at home with posets.

\section{Comparison Between Parameters}
\label{sec:comparison}

The following lemma, which may be of independent interest, is the main tool for the proof of \cref{thm:dim-le-2pw}.

\begin{lemma}\label{lem:extensions-on-coordinates}
    For every poset $P$ and for every map $\mu$ for $P$, there exists a map $\mu'$ for $P$ such that each element of $P$ that is tight on $\mu$ is tight on $\mu'$, and the two linear orders of the elements of $P$ given by the orders on the coordinates of $\mu'$ are linear extensions of $P$.
\end{lemma}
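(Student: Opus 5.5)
The plan is to build the two coordinate orders of $\mu'$ directly as linear extensions of suitable acyclic relations that contain the order of $P$. Let $T$ be the set of elements that are tight on $\mu$. For $i\in\{1,2\}$, define a relation $D_i$ on $P$ as the union of the strict order $<_P$ and all pairs $(y,x)$ such that $x\in T$ and $\mu(y)_i<\mu(x)_i$. If $D_i$ is acyclic, we take a linear extension $L_i$ of its transitive closure and set $\mu'(z)_i$ to be the position of $z$ in $L_i$. Then $\mu'$ is injective, and both coordinate orders are linear extensions of $P$, because $L_i$ contains $<_P$.

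The main step is proving that $D_i$ is acyclic. First we use tightness once. If $x\in T$ and $x<_P w$, then $\mu(w)$ dominates $\mu(x)$, so $\mu(x)_i\le\mu(w)_i$. Now suppose $D_i$ contains a cycle. It cannot consist only of $<_P$-pairs, since $<_P$ is a strict order. So the cycle contains at least one pair of the second kind. Split the cycle into maximal runs of consecutive $<_P$-pairs. By transitivity, each run from $a$ to $b$ gives $a<_P b$. Every run starts at the head of a pair of the second kind, and that head lies in $T$. Hence along each run the $i$-th coordinate of $\mu$ weakly increases, by the observation above. Along each pair of the second kind it strictly increases. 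Going once around the cycle therefore gives a strict increase of $\mu(\cdot)_i$ from a point back to itself, which is a contradiction. I expect this bookkeeping, namely that every $<_P$-segment begins at a tight element, to be the only delicate point.

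It remains to verify that every $x\in T$ is tight on $\mu'$. If $x\le_P y$, then $y$ comes weakly after $x$ in both $L_1$ and $L_2$, so $\mu'(y)$ dominates $\mu'(x)$.

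Conversely, suppose $x\not\le_P y$. We claim that $\mu(y)_i<\mu(x)_i$ for some $i$. Otherwise $\mu(y)_1\ge\mu(x)_1$ and $\mu(y)_2\ge\mu(x)_2$. Then $\mu(x)\le_{\mathrm{dom}}\mu(y)$, and tightness of $x$ on $\mu$ gives $x\le_P y$, a contradiction; injectivity of $\mu$ plays no role here. So $\mu(y)_i<\mu(x)_i$ for some $i$. Then $(y,x)\in D_i$, so $y$ precedes $x$ in $L_i$. Hence $\mu'(y)$ does not dominate $\mu'(x)$, and $x$ is tight on $\mu'$.
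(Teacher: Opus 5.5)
Your proof is correct, but it reaches the result by a different route from the paper. The paper builds $\mu'$ explicitly. It lists the tight elements as $x_1,\dots,x_k$ in the order of $\mu(\cdot)_1$, and assigns to each non-tight $p$ the index $i(p)$ of the last $x_i$ below $p$ in $P$. It then forms the concatenation $K_0x_1K_1\dots x_kK_k$, where each $K_i$ is a linear extension of the block $\{p : i(p)=i\}$. A case analysis follows, checking that $K$ and $L$ are linear extensions and that tight elements stay tight.

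You instead collect every constraint a valid coordinate order must satisfy into the relation $D_i$. You prove $D_i$ acyclic with a single potential argument along $\mu(\cdot)_i$, and take an arbitrary topological sort. Up to the tie issue below, the paper's $K$ is one particular linear extension of your $D_1$: the one that pushes each non-tight element as far left as its tight predecessors allow.

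Your formulation is shorter and replaces the four-case verification. It also handles coordinate ties in $\mu$ automatically. The paper takes ``any'' linear order of $T$ extending $\mu(\cdot)_1$. If two tight elements $p<_P p'$ share their first coordinate, an unlucky tie-break puts $p'$ before $p$ in $K$. So that step tacitly needs ties broken compatibly with $P$. In your $D_i$ this is forced, because ${<_P}\subseteq D_i$ and the added pairs use only strict inequalities.

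The paper's explicit construction, on the other hand, gives a concrete description of $\mu'$. Its $i(p)$-bookkeeping is also reused almost verbatim in the proof of Lemma~\ref{lem:two-chains}.
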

\begin{proof}
    Let $T = \{t_1,\dots,t_k\}$ be the set of elements of $P$ that are tight on $\mu$.
    Note that some elements of $P$ might have the same value under $\mu(\cdot)_1$ or $\mu(\cdot)_2$.
    Let $x_1,\dots,x_k$ be any linear order on the elements of $T$ extending the order given by $\mu(\cdot)_1$, and let $y_1,\dots,y_k$ be any linear order on the elements of $T$ extending $\mu(\cdot)_2$.
    For each element $p$ of $P$ with $p \notin T$, let
    \[i(p) = \max(\{i \in [k] : x_i < p\} \cup \{0\} )\ \ \text{ and } \ \ 
        j(p) = \max(\{j \in [k] : y_j < p\} \cup \{0\}).\]
    In other words, $x_{i(p)}$ is the largest element of $T$ with respect to $\mu(\cdot)_1$ that is below $p$ in $P$, and $y_{j(p)}$ is defined analogously.
    Next, for each $i \in [k] \cup\{0\}$, let
    \[X_i = \{p \in P\setminus T : i(p) = i\}  \ \ \text{ and } \ \ Y_i = \{p \in P\setminus T : j(p) = i\}.\]\ 
    
    For each $i \in [k] \cup \{0\}$, let $K_i$ be an arbitrary linear extension of the subposet of $P$ induced by the elements of $X_i$, and let $L_i$ be an arbitrary linear extension of the subposet of $P$ induced by the elements of $Y_i$. 
    Finally, let $\mu'$ be defined so that the linear order given by $\mu'(\cdot)_1$ is the concatenation $K = K_0x_1K_1x_2K_2\dots K_{k-1}x_kK_k$, and the linear order given by $\mu'(\cdot)_2$ is the concatenation $L = L_0y_1L_1y_2L_2\dots L_{k-1}y_kL_k$.

    To complete the proof, we need to verify that the elements of $T$ are tight on $\mu'$, and that $L$ and $K$ are linear extensions of $P$. An element $p\not\in T$ can be pushed left and down.  By definition of $i(p)$ and $j(p)$, however, it will not pass any $t\in T$ with $t < p$ in $P$. Hence, elements of $T$ remain 
    dominated by all their successors in $P$ which means they remain tight.
    
    \textcolor{gray}{More formally, let $t \in T$ be such that $t = x_i = y_j$.
    We will show that $\suc_P[t] = \{p \in P : \mu'(t) \leq \mu'(p)\}$.
    First, we have $\{p \in P : \mu(t) \leq \mu(p)\} \cap T = \{p \in P : \mu'(t) \leq \mu'(p)\} \cap T$.
    Next, let $p$ be an element of $P$ with $p \notin T$.
    If $p \in \suc_P(t)$, i.e., $t < p$ in $P$, then $i \leq i(p)$ and $j \leq j(p)$, hence, $t < p$ in both $L$ and $K$, and so, $\mu'(t) \leq \mu'(p)$.
    If $\mu'(t) \leq \mu'(p)$, then $i \leq i(p)$ and $j \leq j(p)$, hence, since the elements of $T$ are tight on $\mu$, we have $\mu(t)_1 \leq \mu(x_{i(p)})_1 < \mu(p)_1$ and $\mu(t)_2 \leq \mu(y_{j(p)})_2 < \mu(p)_2$.
    It follows that $t \leq p$ in $P$, as desired.}

    Note that $K$ and $L$ are linear extensions of $P$.
    \textcolor{gray}{Indeed, let $p$ and $p'$ be distinct elements of~$P$ with $p < p'$,
    we claim that $p < p'$ in $K$.
    If $p,p' \in T$, then this is clear since the elements of $T$ are tight on $\mu$.
    If $p =x_i\in T$ and $p' \notin T$, then $i \leq i(p')$, hence, $p < p'$ in $K$.
    If $p \notin T$ and $p' =x_i\in T$, then $i(p) < i$ as otherwise $p' \leq x_{i(p)} < p$ in $P$, which is a contradiction.
    From $i(p) < i$ we then get $p < p'$ in $K$.
    Finally, assume that $p,p' \notin T$. By transitivity $i(p) \leq i(p')$ follows from $p < p'$.
    If $i(p) = i(p')$, then since $K_{i(p)}$ is a linear extension of the subposet of $P$ induced by the elements of $X_{i(p)}$, $p < p'$ in $K$.
    If $i(p) < i(p')$, then clearly $p < p'$ in $K$.
    This completes the proof that $K$ is a linear extension of~$P$. 
    The proof that $L$ is a linear extension of $P$ is symmetric.}
\end{proof}

\begin{corollary}\label{cor:dim-2at}
    For every poset $P$, $\dim(P)\le 2\at(P)$.
\end{corollary}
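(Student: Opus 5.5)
The plan is to take an atlas $\mu_1,\dots,\mu_m$ for $P$ with $m=\at(P)$ and use \cref{lem:extensions-on-coordinates} to turn it into a realizer of size $2m$. Applying the lemma to each $\mu_r$ gives a map $\mu_r'$. Every element tight on $\mu_r$ is still tight on $\mu_r'$, so $\mu_1',\dots,\mu_m'$ is again an atlas. Moreover, the two coordinate orders of each $\mu_r'$ are linear extensions of $P$; call them $K_r$ and $L_r$, and break any ties in a coordinate so that they are linear orders. The lemma's construction already produces linear orders, so this step is free. The candidate realizer is $\mathcal{R}=\{K_1,L_1,\dots,K_m,L_m\}$, which has at most $2m$ members.

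The key step is to check that $\mathcal{R}$ realizes $P$. Every member is a linear extension, so it suffices to show that for each incomparable pair $x,y$ some member of $\mathcal{R}$ puts $y$ below $x$. Pick $r$ such that $x$ is tight on $\mu_r'$. Since $y\notin\suc_P[x]$, tightness gives that $\mu_r'(x)\le_{\mathrm{dom}}\mu_r'(y)$ fails. Hence in at least one coordinate $\mu_r'(y)$ is strictly below $\mu_r'(x)$, because the map is injective and ties were broken consistently. That means $y<x$ in $K_r$ or in $L_r$. Swapping the roles of $x$ and $y$ gives a member with $x$ below $y$. So every incomparable pair is reversed in $\mathcal{R}$, and $P=\bigcap\mathcal{R}$.

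I expect the only delicate point to be equal coordinates. The definition of tightness uses $\le_{\mathrm{dom}}$ on real coordinates, so I must make sure that "not dominated" turns into a strict reversal in the linear order $K_r$ or $L_r$. This holds provided the coordinates of $\mu_r'$ are realized as the positions in the concatenations $K$ and $L$ from the proof of the lemma. Those positions are distinct integers, so no ties arise. With that settled, $\dim(P)\le|\mathcal{R}|\le 2\at(P)$.
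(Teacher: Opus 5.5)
Your proposal is correct and matches the paper's proof: normalize an optimal atlas with \cref{lem:extensions-on-coordinates}, take the $2\at(P)$ coordinate orders, and use tightness of one element of an incomparable pair to get a strict coordinate inequality that reverses the pair. The discussion of ties is harmless but unnecessary, since $\mu'(x)\le_{\mathrm{dom}}\mu'(y)$ failing already gives a strict inequality in some coordinate.
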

\begin{proof}
    Let $P$ be a poset.
    By~\Cref{lem:extensions-on-coordinates}, there exists a collection of maps $\cal P$ for $P$ of size $\at(P)$ such that every element of $P$ is tight on at least one of the maps, and for every map the two linear orders of the elements of $P$ given by the orders on the coordinates of the map are linear extensions of $P$.
    We claim that the collection $\cal L$ of $2\at(P)$ mentioned linear extensions of $P$ is a realizer of $P$.

    Let $x$ and $y$ be two incomparable elements in $P$.
    To conclude, we show that there is a linear extension in $\cal L$ such that $x < y$ in $L$.
    Let $\mu \in \cal P$ be a map on which $y$ is tight.
    In particular, $\mu(y) \not\leq \mu(x)$, and so, there is an $i \in [2]$ such that $\mu(x)_i < \mu(y)_i$.
    This completes the proof.
\end{proof}

\begin{lemma}
  \label{lem:two-chains}
  For every poset $P$ and for every two chains $C$ and~$D$ in~$P$,
  there exists a map~$\mu$ for~$P$ such that all the elements of $C$
  and $D$ are tight on $\mu$.
\end{lemma}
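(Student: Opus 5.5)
The plan is to build the map directly from two linear extensions of~$P$: one adapted to~$C$, used as the first coordinate, and one adapted to~$D$, used as the second coordinate. The key observation is that an element~$x$ is tight on a map~$\mu$ whose coordinate orders $L_1$ and $L_2$ are both linear extensions of~$P$ exactly when every $y \notin \suc_P[x]$ lies before~$x$ in $L_1$ or in $L_2$. Indeed, the successors of~$x$ automatically come after~$x$ in both orders and hence dominate~$\mu(x)$. Any other element that is before~$x$ in at least one coordinate order does not dominate~$\mu(x)$.

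\emph{Step 1 (one chain).} Let $C = \{c_1 < c_2 < \dots < c_m\}$. I will construct a linear extension $K$ of~$P$ such that, for each~$i$, every element $y$ with $c_i \not\le_P y$ comes before $c_i$ in~$K$. This mirrors the construction in the proof of \cref{lem:extensions-on-coordinates}. For $y \in P \setminus C$, let $i(y) = \max(\{i \in [m] : c_i < y\} \cup \{0\})$, and let $X_i = \{y \notin C : i(y) = i\}$. Take $K = K_0 c_1 K_1 c_2 \dots c_m K_m$, where $K_i$ is an arbitrary linear extension of the subposet induced by~$X_i$.

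We then check that $K$ is a linear extension, by the same case analysis as in \cref{lem:extensions-on-coordinates}:
\begin{itemize}
\item if $y < y'$ with both outside~$C$, then $i(y) \le i(y')$ by transitivity;
\item if $c_i < y$, then $i \le i(y)$;
\item if $y < c_i$, then $i(y) < i$, since otherwise $c_i \le c_{i(y)} < y$, a contradiction.
\end{itemize}
The required property also holds. If $c_i \not\le y$ with $y \notin C$, then $i(y) < i$ because $C$ is a chain, so $y$ precedes~$c_i$. If $y = c_j$ with $c_i \not\le c_j$, then $j < i$, so again $y$ precedes~$c_i$.

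\emph{Step 2 (two chains).} Apply Step~1 to~$C$ to obtain~$K$, and to~$D$ to obtain~$L$. Define $\mu(x) = (\text{position of } x \text{ in } K,\ \text{position of } x \text{ in } L)$; this map is injective. For $x \in C$, every non-successor of~$x$ precedes~$x$ in~$K$, and every successor of~$x$ follows~$x$ in both $K$ and~$L$, so $x$ is tight. The same argument with~$L$ shows that every element of~$D$ is tight. Elements lying in $C \cap D$ are covered by either argument.

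There is no real obstacle here. The only point requiring care is the verification in Step~1 that $K$ is a linear extension while still pushing every non-successor of each~$c_i$ before~$c_i$. That verification is essentially the bookkeeping already carried out in the proof of \cref{lem:extensions-on-coordinates}, now with the chain~$C$ playing the role of the tight set~$T$.
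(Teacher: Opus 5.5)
Your proposal is correct and matches the paper's proof: the same block decomposition $K = K_0c_1K_1\dots c_mK_m$, built from $i(\cdot)$ for $C$, serves as the first coordinate, and the analogous order $L$ for $D$ serves as the second. Your verification is cleaner than the paper's. You explicitly check that $K$ is a linear extension, and you get the second-coordinate domination of successors directly from $L$ being a linear extension.
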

\begin{proof}
    Let $P$ be a poset and let $C = c_1 <\dots <c_k$ and $D = d_1<\dots <d_\ell$ be chains in $P$.
    Similarly as in the proof of~\Cref{lem:extensions-on-coordinates}, for each element $p$ of $P$, we define 
        \[i(p) = \max(\{i \in [k] : c_i \leq p\} \cup \{0\}) \ \ \text{ and } \ \ 
            j(p) = \max(\{j \in [\ell] : d_j \leq p\} \cup \{0\}).\]
    Next, for each $i \in [k] \cup\{0\}$ and $j \in [\ell] \cup \{0\}$, let
        \[C_i = \{p \in P\setminus C : i(p) = i\}  \ \ \text{ and } \ \ D_j = \{p \in P\setminus D : j(p) = j\}.\]
    For each $i \in [k] \cup \{0\}$, let $K_i$ be an arbitrary linear extension of the subposet of $P$ induced by the elements in $C_i$, and for each $j \in [\ell]\cup \{0\}$, let $L_j$ be an arbitrary linear extension of the subposet of $P$ induced by the elements in $D_j$. Finally, let $\mu$ be defined so that the linear order given by $\mu(\cdot)_1$ is the concatenation $K = K_0c_1K_1c_2K_2\dots K_{k-1}c_kK_k$, and the linear order given by $\mu(\cdot)_2$ is the concatenation $L = L_0d_1L_1d_2L_2\dots L_{\ell-1}d_\ell L_\ell$.

    To complete the proof, we need to verify that the elements of $C \cup D$ are tight on $\mu$. 
    The orders $L$ and $K$ are linear extensions of~$P$ by definition.
    The elements of $C$ are tight in $K$ and the elements of $D$ are
    tight in $L$. Since the order induced by the plane map~$\mu$
    is the intersection of $K$ and $L$, all elements of $C\cup D$
    are tight in $\mu$.
    
    \textcolor{gray}{More formally, let $i \in [k]$ and we will show that $\suc_P[c_i] = \{p \in P : \mu(c_i) \leq \mu(p)\}$.
    The proof for the elements of $D$ is symmetric.
    First, let $p \in \suc_P[c_i]$.
    We claim that $\mu(c_i) \leq \mu(p)$.
    This is clear when $p = c_i$, hence, assume otherwise.
    Since $c_i < p$ in $P$, we have $i \leq i(p)$ and $c_i < p$ in $K$.
    This gives $\mu(c_i)_1 < \mu(p)_1$.
    By definition, $d_{j(p)} \leq p$ in $P$.
    It follows that $d_{j(p)} \not\leq c_i$ in $P$.
    In particular, as $D$ is a chain, $j(c_i) < j(p)$ and $c_i < p$ in $L$.
    This gives $\mu(c_i)_2 < \mu(p)_2$ and $\mu(c_i) < \mu(p)$, as desired.
    Next, let $p$ be an element of $P$ with $\mu(c_i) \leq \mu(p)$.
    In particular, $\mu(c_i)_1 \leq \mu(p)_1$ and $c_i < p$ in $K$.
    Since $C$ is a chain, this implies $c_i < p$ in $P$ and $p \in \suc_P[c_i]$.
    This completes the proof.}
\end{proof}

\thmdimlepw*
\begin{proof}
\Cref{cor:dim-2at} implies the first inequality.
By Dilworth's theorem, a poset $P$ can be covered by a family of $\width(P)$ chains, hence,~\Cref{lem:two-chains} implies the second inequality of the statement.
\end{proof}

\section{The Constructions}\label{sec:lower}

In this section, we prove \cref{thm:dim-eq-2at,thm:pw-2-2-dim}. In the core of the arguments, we use the following 6-element  poset
$Q$ defined on the elements $\{a,b,c,ab,bc,ac\}$ where $a,b,c$ are minimal elements, $ab,bc,ac$ are maximal elements, and the order relations are: $a < ab$, $a <ac$, $b < ab$, $b < bc$, $c < ac$, $c < bc$; see \cref{fig:intro}(a).
Among order theorists, the poset~$Q$ is known as a \emph{crown} or \emph{cycle} and also as the \emph{standard example}~$S_3$.
It is well known that the dimension of $Q$ is~$3$.
This fact clearly implies~\Cref{lem:posetQ} below.
For completeness, we give a direct proof of the lemma anyway.

\begin{lemma}\label{lem:posetQ}
    There does not exist a map of $Q$ where all of $a,b,c$ are tight.
\end{lemma}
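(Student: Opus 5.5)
We argue by contradiction. Suppose $\mu$ is a map of $Q$ on which $a$, $b$ and $c$ are all tight. Write $\mu(x)=(x_1,x_2)$. Each minimal element must be dominated by both of its two upper covers, and must not be dominated by the third maximal element.

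Concretely, for $a$ we need $\mu(a)\le_\mathrm{dom}\mu(ab)$ and $\mu(a)\le_\mathrm{dom}\mu(ac)$, but $\mu(a)\not\le_\mathrm{dom}\mu(bc)$. The last condition means that $bc_1<a_1$ or $bc_2<a_2$. Say $bc$ \emph{escapes} $a$ in coordinate~1 or coordinate~2 accordingly. Similarly, $ac$ escapes $b$ in some coordinate, and $ab$ escapes $c$ in some coordinate. There are three escapes and only two coordinates, so by pigeonhole two of them use the same coordinate.

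Since the situation is symmetric under permuting $a,b,c$ together with the corresponding maximal elements, we may assume that $bc$ escapes $a$ and $ac$ escapes $b$, both in coordinate~1. Then
\[ bc_1 < a_1 \le ac_1 \quad\text{and}\quad ac_1 < b_1 \le bc_1 . \]
Here the inequalities $a_1\le ac_1$ and $b_1\le bc_1$ come from $a<ac$ and $b<bc$, together with tightness of $a$ and $b$. Chaining these gives $bc_1 < ac_1 < bc_1$, which is a contradiction.

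The only step needing care is the symmetry reduction, that is, checking that any pair of escapes in a common coordinate produces such a cyclic chain. We handle it uniformly. If $x\ne y$ are minimal elements and $z_x$, $z_y$ denote the maximal elements not above $x$ and $y$ respectively, then $z_y$ lies above $x$ and $z_x$ lies above $y$. A common escape coordinate $i$ therefore yields
\[ (z_x)_i < x_i \le (z_y)_i < y_i \le (z_x)_i , \]
which is impossible. Note that the argument only uses the non-strict inequalities $\le$ coming from dominance, so it needs nothing beyond the definition of tightness.
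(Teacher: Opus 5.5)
Your proof is correct, but it takes a different route from the paper's.

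The paper first observes that tight, pairwise incomparable elements are mapped to a strict antichain of the dominance order. After relabeling this gives $\mu(a)_1<\mu(b)_1<\mu(c)_1$ and $\mu(c)_2<\mu(b)_2<\mu(a)_2$. Then $\mu(ac)$ dominates both outer points, so it dominates the middle point $\mu(b)$. This contradicts tightness of $b$, because $b$ and $ac$ are incomparable.

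You instead pigeonhole on the coordinate in which each minimal element is escaped by its unique non-successor. You then close the alternating cycle $(z_x)_i<x_i\le(z_y)_i<y_i\le(z_x)_i$ in a single coordinate. This is the classical argument that a linear extension of a standard example can reverse at most one critical pair $(a_i,b_i)$.

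The paper's version is a one-picture geometric argument. It relies on the planar fact that an antichain in $\mathbb{R}^2$ is linearly ordered, so that a ``middle'' point exists, and it needs the preliminary normalization of the three points into a strict antichain.

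Yours needs no such normalization and uses only the two halves of the definition of tightness. It also carries over verbatim to $\mathbb{R}^k$: no map of the standard example $S_{k+1}$ into $\mathbb{R}^k$ can make all $k+1$ minimal elements tight, since two of the $k+1$ escapes must share a coordinate.
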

\begin{proof}
\textcolor{gray}{Suppose to the contrary that $\mu$ is a map of $Q$ where all of $a,b,c$ are tight. Since they are all tight and they are pairwise incomparable in $Q$, we may assume that $\mu(a)_1 < \mu(b)_1 < \mu(c)_1$ and $\mu(c)_2 < \mu(b)_2 < \mu(a)_2$.
Since $a < ac$ and $c < ac$ in $Q$, we have $\mu(c)_1 < \mu(ac)_1$ and $\mu(a)_2 < \mu(ac)_2$.
It follows that $\mu(b)_1 < \mu(ac)_1$ and $\mu(b)_2 < \mu(ac)_2$, hence, $\mu(b) < \mu(ac)$.
However, $b$ and $ac$ are incomparable in $Q$, which is a contradiction with $b$ being tight in the considered map; see \cref{fig:intro}(b), where $b$ is not tight.}
\end{proof}

\thmdimeqat*
\begin{proof}
    For every positive integer $m$, let $S_m$ be the poset consisting of minimal elements $a_1,\dots,a_{m}$ and maximal elements $b_1,\dots,b_{m}$ so that for all $i,j \in [m]$, we have $a_i < b_j$ if and only if $i \neq j$.
    In other words, $S_m$ is the \emph{standard example} of order $m$.
    Standard examples are the canonical posets forcing dimension to be large, and they date back to the foundational paper of Dushnik and Miller~\cite{DM41} that initiated the study of dimension of posets.
    It is well-known that the dimension of the standard example of order $m$ is exactly $m$.

    We show that the poset $P_n = S_{2n}$ witnesses the statement for
    every positive integer~$n$.  We have $\dim(S_{2n}) = 2n$.  By
    \cref{thm:dim-le-2pw}, $\dim(S_{2n}) \leq 2\at(S_{2n})$, which implies that $\at(S_{2n}) \ge n$. 
    Hence, it suffices to show that $\at(S_{2n}) \leq n$.  For every
    $i \in [n]$, there exists a map where $a_{2i-1}$ and $a_{2i}$ are
    tight; see \cref{fig:a2ia2i+1}.  Moreover, for every
    $j \in [2n]$, $b_j$ is tight in each of these maps.  This proves
    the desired inequality.
\end{proof}

\begin{figure}[h]
        \centering
        \includegraphics[page=8]{poset}
    \caption{Illustration for the proof of \cref{thm:dim-eq-2at}: a map where $a_{2i}$ and $a_{2i+1}$ are tight.}
    \label{fig:a2ia2i+1}
\end{figure} 

\thmpwgedim*
\begin{proof}
For every positive integer $m$, let $P_m$ be the incidence poset of the complete graph on $m$ vertices, $K_m$. 
The dimension of such posets is well understood.
In fact, in the 1970s, Spencer proved that $\dim(P_m) = \mathcal{O}(\log \log m)$, see~\cite[p.580]{W21} and~\cite{HOSTEN1999133} for more precise estimates of $\dim(P_m)$.
To complete the proof, we show that for every positive integer~$m$, $\at(P_{2m}) \geq m$.

Let $m$ be a positive integer.
We claim that there is no triple $u,v,w$ of the vertices of $K_{2m}$ such that $u,v,w$ are all tight on a single map of $P_{2m}$.
Indeed, the subposet of $P_{2m}$ formed by $u,v,w,uv,vw,uw$ is isomorphic to the poset~$Q$.  Hence, the claim follows from~\Cref{lem:posetQ}.
This implies that $m \le \at(P_{2m})$, as desired.
\end{proof}

\section{Complexity}
\label{sec:hardness}
\label{se:hard}

In this section, we prove that it is \NP-complete to compute the atlas thickness or the dominance mapability of a given poset (\Cref{thm:hardness-at,thm:hardness-dmap}).  
The main idea is to consider incidence posets of graphs and to show
that a subset of tight vertices in such a poset corresponds to a
\emph{linear forest}, that is, a collection of induced paths, in the
graph (see~\Cref{le:linearforest_to_dom,le:dom_to_linearforest}).

In the case of the atlas thickness of a poset (\Cref{thm:hardness-at}), we will reduce from the problem of partitioning the vertex set of a graph into two sets such that each induces a linear forest.
The \NP-hardness of this problem was observed by Chaplick, Fleszar, Lipp,
Ravsky, Verbitsky, and Wolff~\cite[Theorem~12]{cflrvw-cdgfl-JGAA23}.
It follows from a general result of Farrugia~\cite{f-vpfai-EJC04}, who showed that, for any additive induced-hereditary properties~$\Pi$, it is \NP-hard to decide whether the vertex set of a graph can be partitioned into two sets that have property~$\Pi$.

In the case of the dominance mapability of a poset (\Cref{thm:hardness-dmap}), we will reduce from the problem of finding a subset of $k$ vertices of a given graph that induces a linear forest.
The \NP-hardness of this problem follows from another general result
proved by Lewis and Yannakakis~\cite{LEWIS1980219}, who showed that
removing the minimum number of vertices from a graph~$G$ such that the
remaining induced subgraph has a property~$\Pi$ is \NP-hard if~$\Pi$ is
non-trivial and hereditary.

The crucial ingredient for the proof is the well-known characterization of incidence posets of dimension at most~$2$, which is stated in the following proposition.
We provide a direct proof, which will help us to show~\Cref{le:linearforest_to_dom}.

\begin{proposition}
  \label{prop:Gdim2}
  The dimension of the incidence poset $P_G$ of a graph $G$ is at most~$2$ if and only if $G$ is a linear forest.
\end{proposition}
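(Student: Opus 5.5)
We prove the two directions separately, using the standard example $Q=S_3$ (\Cref{lem:posetQ}) for necessity and an explicit construction for sufficiency.

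\textbf{($\Rightarrow$).} Suppose $\dim(P_G)\le 2$. Then there is a map in which every element of $P_G$ is tight.

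First, $G$ has no cycle. We check triangles and longer cycles separately.
\begin{itemize}
\item A triangle $u,v,w$ would induce a copy of $Q$ in $P_G$. Restricting the map to it gives a map of $Q$ with $u,v,w$ all tight, contradicting \Cref{lem:posetQ}.
\item For a longer cycle $v_1,\dots,v_k$, the same idea applies. The incidence poset of $C_k$ is the crown on $2k$ elements, which is well known to have dimension $3$.
\end{itemize}
For a direct argument in the spirit of \Cref{lem:posetQ}, order the pairwise incomparable vertices of the cycle so that their first coordinates increase and their second coordinates decrease. Take a vertex $v_i$ that is neither the first nor the last in this order, and let $v_j$ be its cycle-neighbour (or another cycle vertex) lying on the far side. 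If some cycle edge joins a vertex left of $v_i$ to a vertex right of $v_i$, that edge dominates $v_i$ in the plane while being incomparable to it, which contradicts tightness of $v_i$. A cycle must cross from the left part to the right part at least twice. Since $v_i$ has only two incident edges on the cycle, some crossing edge avoids $v_i$, giving the contradiction.

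Second, $G$ has no vertex of degree $\ge 3$. The incidence poset of the star $K_{1,3}$ has dimension $3$. Directly: for a centre $v$ with neighbours $a,b,c$, sort $a,v,b,c$ along the antidiagonal. Whichever of them lies strictly between two others is dominated by the edge joining those two outer vertices, provided the middle vertex is not an endpoint of that edge. A short case analysis on the position of $v$ finds a middle vertex that is a leaf, together with an outer pair connected through $v$. Here some care is needed: the edge $va$ dominates the points $v$ and $a$ only, so the argument must use an edge whose endpoints straddle a leaf. This case analysis is the step I expect to be the main obstacle.

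Hence $G$ is acyclic with maximum degree $\le 2$, that is, a linear forest.

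\textbf{($\Leftarrow$).} Let $G$ be a linear forest. Concatenate its paths into one vertex sequence $v_1,\dots,v_n$ in which every edge joins consecutive vertices. Place $v_i$ at $(i,-i)$ and each edge $v_iv_{i+1}$ at $(i+1,-i)$.

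We verify that every element is tight.
\begin{itemize}
\item Each edge is maximal in $P_G$, and no point dominates it: any vertex or other edge has either a smaller first coordinate or a smaller second coordinate.
\item A vertex $v_i$ is dominated by a point $(x,y)$ exactly when $x\ge i$ and $y\ge -i$. Among the vertex points, only $v_i$ itself qualifies. Among the edge points, only $v_{i-1}v_i$ at $(i,-i+1)$ and $v_iv_{i+1}$ at $(i+1,-i)$ qualify. These are precisely the successors of $v_i$.
\end{itemize}
Edges absent from $G$ do not appear in the map. Thus all elements are tight, so $\dim(P_G)\le 2$. This construction is also what will later be reused in \Cref{le:linearforest_to_dom}.
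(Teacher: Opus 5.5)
Your proof is correct and follows the paper's approach. For ($\Leftarrow$), your coordinates $(i,-i)$ and $(i+1,-i)$ are exactly the paper's realizer $K,L$ written as a plane map; omitting the non-edges between consecutive paths corresponds to the paper's passage to an induced subposet of a path. For ($\Rightarrow$), you fall back, as the paper does, on the known fact that cycle incidence posets and the spider have dimension~$3$.

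The star case you flagged as the main obstacle is actually immediate. Two of the three leaves lie on the same side of $v$ along the antidiagonal, and the one nearer to $v$ lies strictly between $v$ and the farther leaf $\ell$, so it is dominated by the edge $v\ell$. With that, your direct arguments are self-contained, and since they use only tightness of the vertices, they also yield \Cref{le:dom_to_linearforest} directly.
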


\begin{proof}
The incidence order of a linear forest is an induced subposet of the incidence poset of a path, 
hence, it is sufficient to show that the incidence poset of a path is 2-dimensional. Let~$G$ be a path with vertices $a_1,\ldots,a_{\ell+1}$ and edges
$e_i=a_ia_{i+1}$ for $i=1,\ldots,\ell$, then the following two
linear extensions of $P_G$ are a realizer of size two:
$K = a_1,a_2,e_1,a_3,e_2,\ldots,a_{\ell+1},e_\ell$ and
$L = a_{\ell+1},a_\ell,e_{\ell},\dots, a_2,e_2,a_1,e_1$. 

If $G$ is not a forest, then $G$ contains a cycle $C$. It is well known that the dimension of $P_C$ is 3, hence $\dim(P_G) \geq 3$ in this case. If $G$ is a forest but not a linear forest, then 
$G$ contains a 3-star. The incidence poset of a 3-star is 
a poset on 7 elements called \emph{spider}. It is well known 
that the dimension of the spider is 3, hence again $\dim(P_G) \geq 3$. Cycle posets and the spider are indeed 3-irreducible
posets, i.e., they are 3-dimensional and all their subposets are
2-dimensional, cf.~\cite[p.~573]{W21} and the references given there.
\end{proof}

\begin{lemma}
  \label{le:linearforest_to_dom}
  Let $G$ be a graph and let $U \subseteq V(G)$.  If $G[U]$ is a
  linear forest, then there exists a map of $P_G$ such that the
  elements in $U \cup E(G)$ are tight.
\end{lemma}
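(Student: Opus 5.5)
The plan is to build the map from three layers: a $2$-dimensional realization of the linear forest part, the edges leaving $U$, and everything lying entirely outside $U$. Let $W=V(G)\setminus U$ and split $E(G)$ into three classes:
\begin{itemize}
\item $E_U$, the edges of $G[U]$;
\item $E_{UW}$, the edges with exactly one endpoint in $U$;
\item $E_W$, the edges with both endpoints in $W$.
\end{itemize}
Edges are maximal elements of $P_G$, so an edge $e$ is tight exactly when no other element is mapped to a point dominating or equal to $\mu(e)$. A vertex $u\in U$ is tight exactly when the points dominating $\mu(u)$ are precisely the images of the edges incident to $u$. Vertices of $W$ need not be tight. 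Their only constraints are that they must not dominate any tight element, and that each of them must be dominated by all of its incident edges.

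\textbf{Step 1 (core).} Apply \Cref{prop:Gdim2} to the linear forest $G[U]$, concatenating the paths, and use the explicit realizer $K,L$ from its proof. Turning the positions in $K$ and $L$ into first and second coordinates gives a map of $U\cup E_U$ in which every element is tight, since the realizer encodes the order exactly.

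\textbf{Step 2 (pendant edges).} This is the main obstacle. Each $f=uw\in E_{UW}$ with $u\in U$ must dominate $u$ and no other vertex of $U$, and no edge may dominate $f$. In the coordinates of Step 1 this means inserting $f$ into $K$ and into $L$ at positions where the intersection of the two down-sets, restricted to $U\cup E$, is exactly $\{u\}$ together with the new $W$-endpoint.

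Placing $f$ immediately after $u$ in both orders is tempting, but it fails: the path edges at $u$ then dominate $f$, and $f$ would no longer be tight. I would instead work geometrically. Place $f$ slightly to the right of and slightly above $\mu(u)$, nudged so that it sits strictly inside the region dominating $\mu(u)$ but outside the regions dominating its path neighbours. Because $u$ is tight, the closed quadrant above $\mu(u)$ contains only $u$ and its two path edges, and these lie in the two "arms" of the quadrant. So there is room near the corner of this quadrant. Several pendant edges at the same $u$ can be placed there on a tiny antidiagonal segment, so that they are pairwise incomparable.

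The delicate part is to guarantee both of the following with a single choice of positions. First, $f$ dominates no vertex of $U$ other than $u$. Second, $f$ is not dominated by the path edges at $u$. If necessary, I would first stretch the Step 1 map (realizers only fix orders, so coordinates can be spread freely) so that each $\mu(u)$ has an empty neighbourhood in which to place its pendant edges. I would then check the four relevant relative positions: $f$ against $u$, against the neighbours of $u$, against the edges at $u$, and against far elements, which separate by order type alone.

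\textbf{Step 3 (the outside part).} Put all vertices of $W$ on an antidiagonal far below and to the left of everything, so that every other point dominates them and they dominate nothing except one another, which they do not since they lie on an antidiagonal. Then every pendant edge dominates its $W$-endpoint automatically. Place the edges of $E_W$ on a second antidiagonal with first coordinate between the $W$-vertices and the rest, and second coordinate very large. These edges dominate only $W$-vertices, so no vertex of $U$ has an extra successor. They are pairwise incomparable, and nothing dominates them: anything with a larger second coordinate would also need a larger first coordinate, and nothing lies that high. Finally, verify that every element of $U\cup E(G)$ satisfies the tightness condition by checking each class against each other class. Every pair except those treated in Step 2 is separated by this coarse layout.
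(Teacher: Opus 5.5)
Your Steps~1 and~3 match the paper's construction and work. You are also right that inserting a pendant edge $f=uw$ immediately after $u$ in both $K$ and $L$ fails. The gap is that your geometric replacement is the same placement again. In the Step~1 map, both path edges at $u$ strictly dominate $\mu(u)$, so every point close enough to $\mu(u)$ lies strictly below both of them. Putting $f$ ``slightly to the right of and slightly above $\mu(u)$'', ``near the corner'' of its quadrant, or in a small empty neighbourhood of $\mu(u)$ therefore makes $f$ dominated by the path edges. Stretching cannot help, since it preserves all strict dominances. The ``room near the corner'' you use as your existence argument is exactly the forbidden region. So the check you defer as ``the delicate part'' fails with your placement whenever $u$ has degree at least~$1$ in $G[U]$, and that check is the whole content of the lemma.

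The missing idea is that $f$ must exceed each path edge at $u$ in some coordinate, so it cannot sit near $\mu(u)$. The right spot is between the two arms. Its first coordinate should exceed that of the edge in the upper arm, its second coordinate should exceed that of the edge in the right arm, and it should stay below their remaining coordinates. Concretely, take $u=a_i$ and the realizer $K=a_1,a_2,e_1,a_3,e_2,\dots$, $L=a_{\ell+1},a_\ell,e_\ell,\dots,a_1,e_1$ from \Cref{prop:Gdim2}. Insert $f$ into $K$ right after $e_{i-1}$ and into $L$ right after $e_i$. If $e_{i-1}$ or $e_i$ does not exist, or is an auxiliary edge used to join the paths, use the position right after $u$ instead. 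Several pendant edges at $u$ go consecutively, in opposite orders in $K$ and $L$.

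Then $f$ precedes $a_{i+1}$ in $K$ and $a_{i-1}$ in $L$, so $u$ is the only vertex of $U$ below $f$. Also, $f$ is incomparable to $e_{i-1}$ and $e_i$. One checks directly that $f$ is incomparable to all other edges, including pendant edges at other vertices. Geometrically, $f$ sits just above and to the right of $(\mu(e_{i-1})_1,\mu(e_i)_2)$, between the two path edges on the antidiagonal of edges. This is what \cref{fig:hard-2} shows: the black squares on $\ell'$ lie between $u_{i-1}u_i$ and $u_iu_{i+1}$. The paper's phrase ``just above $u$'' should be read in this sense.
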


\begin{proof}
  In the proof of \cref{prop:Gdim2}, we have shown how to construct
  two linear orders~$K$ and~$L$ that form a realizer of~$P_{G[U]}$.
  Edges of~$G$ that are incident to just one element~$u$ in~$U$ can be placed
  in~$K$ and~$L$ just above~$u$.  All the remaining elements of~$P_G$
  can be put at the beginning of $K$ and at the end of~$L$.
  \Cref{fig:hard-2} illustrates the construction.
\end{proof}

\begin{figure}[h]
  \begin{minipage}[b]{.5\textwidth} 
    \begin{subfigure}{.22\textwidth}
        \centering
        \includegraphics[page=2]{drawdim-hard}
        \subcaption{}
        \label{fig:hard-2-a}
    \end{subfigure}\\

    \begin{subfigure}{.35\textwidth}
        \centering
        \includegraphics[page=4]{drawdim-hard}
        \subcaption{\nolinenumbers{}}
        \label{fig:hard-2-b}
    \end{subfigure}
  \end{minipage}
    \hfill  
  \begin{minipage}[b]{.4\textwidth}      
    \begin{subfigure}{.4\textwidth}
        \centering
        \includegraphics[page=5]{drawdim-hard}
        \subcaption{\nolinenumbers{}}
        \label{fig:hard-2-c}
    \end{subfigure}
  \end{minipage}
  \caption{Illustration for the proof of \cref{le:linearforest_to_dom}: (a)~a graph $G$ with a path $G[U]$ highlighted in red; (b)~the incidence poset $P_G$; (c)~the map $\mu$. 
    All the vertices of $U$ are placed on the line $\ell$ in an order compatible with the order of~$U$.
    The edges of type $u_iu_{i+1}$ (red squares) are placed on $\ell'$ between $u_i$ and $u_{i+1}$.
    Each other edge (black square) is placed on~$\ell'$ between the red squares $u_{i-1}u_i$ and $u_iu_{i+1}$ if it is incident to~$u_i$.
    Finally, all the other elements are placed on the side (here at the top of $\ell'$).
  }
    \label{fig:hard-2}
\end{figure} 

\begin{lemma}
\label{le:dom_to_linearforest}
Let $G$ be a graph, let $U \subseteq V(G)$, and let $\mu$ be a map of $P_G$.
If all the elements of $U$ are tight on $\mu$, then $G[U]$ is a linear forest.
\end{lemma}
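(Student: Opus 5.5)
The plan is to show that the incidence poset $P_{G[U]}$ has dimension at most~$2$ and then apply \cref{prop:Gdim2}. The natural tool is \cref{lem:extensions-on-coordinates}. It upgrades $\mu$ to a map $\mu'$ on which every element of $U$ is still tight and whose two coordinate orders $K$ and $L$ are linear extensions of $P_G$. We will restrict $K$ and $L$ to the subposet $P_{G[U]}$, whose ground set is $U \cup E(G[U])$, and argue that the restrictions form a realizer of $P_{G[U]}$.

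Since $K$ and $L$ are linear extensions of $P_G$, their restrictions are linear extensions of the induced subposet $P_{G[U]}$. It remains to check that every incomparable pair $x,y$ of $P_{G[U]}$ appears in both orders in $K$ and $L$. There are three cases.

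First, suppose one of the two elements, say $x$, is a vertex $u \in U$. Then $u$ is tight on $\mu'$ and $u \not< y$ in $P_G$ (comparability in $P_{G[U]}$ coincides with comparability in $P_G$). So $\mu'(y)$ does not dominate $\mu'(u)$, and therefore $y$ precedes $u$ in $K$ or in $L$. If $y$ is also a vertex of $U$, the symmetric argument shows that $u$ precedes $y$ in $K$ or in $L$, so both orders occur. If instead $y = e$ is an edge of $G[U]$ not incident to $u$, then $u$ precedes $e$ in some order: otherwise $e <_K u$ and $e <_L u$, and $e$ has an endpoint $v \in U$ with $v <_K e$ and $v <_L e$, which gives $\mu'(u)$ dominating $\mu'(v)$ and contradicts the tightness of $v$.

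Second, suppose both elements are edges $e,f$ of $G[U]$. Here tightness gives nothing directly, so this is the main obstacle. I would try to pick an endpoint $v$ of $e$ that is not an endpoint of $f$ and argue as follows. The vertex $v$ is tight and $v \not< f$, so $f$ precedes $v$ in one order, and then $f$ precedes $e$ in that order because $v$ precedes $e$ in both. Symmetrically, using an endpoint of $f$ that is not an endpoint of $e$, we get $e$ preceding $f$ in some order. Such endpoints always exist since $e \neq f$ and each has two endpoints.

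With all three cases handled, $K$ and $L$ restricted to $P_{G[U]}$ form a realizer of size~$2$, so $\dim(P_{G[U]}) \le 2$, and \cref{prop:Gdim2} yields that $G[U]$ is a linear forest.

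Alternatively, one could argue by contradiction directly. A cycle in $G[U]$ or a $3$-star in $G[U]$ yields one of the $3$-irreducible posets as a subposet. This fails for edges outside $G[U]$ in the way \cref{lem:posetQ} treats triangles. The realizer argument above avoids this case analysis.
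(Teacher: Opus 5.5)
Your proof is correct and follows the paper's route: the paper only asserts that the lemma follows directly from \cref{prop:Gdim2}, and you make the implicit step explicit by using the coordinate orders of the map from \cref{lem:extensions-on-coordinates} to show $\dim(P_{G[U]})\le 2$. Your case check (vertex--vertex, vertex--edge, edge--edge) covers all incomparable pairs of $P_{G[U]}$, and each case is argued correctly.
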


\begin{proof}
This follows directly from \Cref{prop:Gdim2}.
\end{proof}

\thmhardnessat*

\begin{proof}
The problem is clearly in \NP\ since we can verify a solution in
polynomial time.

As announced at the beginning of this section, we reduce
from the problem of partitioning the
vertex set of a graph into two sets such that each induces a linear
forest, which is \NP-hard~\cite[Theorem~12]{cflrvw-cdgfl-JGAA23}.
We show that the vertices of a graph $G$ can be partitioned
into two sets such that each set induces a linear forest
if and only if $\at(P_G) \le 2$.

First, given such a partition $(U_1,U_2)$ of~$V(G)$,
\cref{le:linearforest_to_dom} implies that there are two maps $\mu_1$ and $\mu_2$ of $P_G$ such that $U_i \cup E(G)$ is tight in $\mu_i$ for each $i \in [2]$.
It follows that $\at(P_G) \leq 2$.

On the other hand, suppose that we are given two maps $\mu_1$ and
$\mu_2$ of $P_G$ such that every element of $P_G$ is tight either on
$\mu_1$ or on $\mu_2$.  By~\Cref{le:dom_to_linearforest}, the vertices
of~$G$ that are tight on~$\mu_1$ induce a linear forest in~$G$, and so
do the vertices that are tight on~$\mu_2$.  This represents the
required partition.
\end{proof}

\thmhardnessdmap*

\begin{proof}
  The problem is clearly in \NP\ since we can verify a solution in
  polynomial time.

  We reduce from the problem of finding, given a graph and a positive
  integer~$k$, a subset of at least $k$ vertices that induces a linear
  forest.  The problem is \NP-hard due to a general result of Lewis
  and Yannakakis~\cite{LEWIS1980219}; see the discussion at the
  beginning of this section.  We show that a graph $G$ contains a
  subset of at most $k$ vertices that induces a linear forest in $G$
  if and only if $\dmap(P_G) \ge k + |E(G)|$.

  Given such a subset, \Cref{le:linearforest_to_dom} immediately
  implies that there exists a map of $P_G$ where at least $k+|E(G)|$
  elements are tight, and so, $\dmap(P_G) \ge k + |E(G)|$.

  On the other hand, suppose that we are given a map of~$P_G$ where at
  least $k+|E(G)|$ elements are tight.  By the pigeonhole principle,
  at least $k$ of these elements are in $V(G)$.  It follows
  by~\cref{le:dom_to_linearforest} that this subset of vertices
  induces a linear forest in $G$, as desired.
\end{proof}

\section{An FPT-Algorithm for Dominance Mapability}
\label{se:FPT}

In this section, we prove~\cref{thm:size-dmap}, which implies that,
given a poset $P$, computing its mapability $\dmap(P)$ is \FPT\ with
respect to the natural parameter (see \cref{cor:FPT}).  
Namely, we
will upperbound the size of a poset in terms of its mapability.  The
next statement is a direct corollary of~\Cref{lem:two-chains} applied
to a maximum chain and a singleton.%

\begin{corollary}
    \label{le:height}
    For every poset $P$ of height~$h$ that is not a chain, we have $h \le \dmap(P)-1$.
\end{corollary}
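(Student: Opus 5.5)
The plan is to apply \Cref{lem:two-chains} with $C$ a longest chain of $P$ and $D=\{x\}$ a singleton chain, where $x$ is an element of $P$ not in $C$. Such an $x$ exists because $P$ is not a chain, so $C\neq P$. The lemma then gives a map $\mu$ on which every element of $C\cup\{x\}$ is tight. Hence $\dmap(P)\ge |C\cup\{x\}| = h+1$, which rearranges to $h\le \dmap(P)-1$.

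The steps, in order, are as follows.
\begin{enumerate}
\item Fix a chain $C$ of maximum size $h$; by definition of height, $|C|=h$.
\item Observe that $C\ne P$ because $P$ is not a chain, and pick any $x\in P\setminus C$.
\item Invoke \Cref{lem:two-chains} with the chains $C$ and $D=\{x\}$, which is trivially a chain, to obtain a map on which all $h+1$ elements of $C\cup\{x\}$ are tight.
\item Conclude from the definition of mapability as a maximum over all maps.
\end{enumerate}

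There is no real obstacle here. The only points needing care are that \Cref{lem:two-chains} allows arbitrary chains, including a one-element chain, and that $x\notin C$, so the two chains contribute $h+1$ distinct tight elements rather than $h$. The hypothesis that $P$ is not a chain is exactly what guarantees such an $x$. Without it the bound fails: for a chain, $\dmap(P)=h$.
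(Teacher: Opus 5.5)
Your proposal is correct and is the paper's argument: the paper also derives this as a direct corollary of \Cref{lem:two-chains}, applied to a maximum chain and a singleton. As you note, the singleton must lie outside the chain, and such an element exists because $P$ is not a chain.
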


A subset $M$ of $P$ is an \emph{up-module} if any two elements of $M$
have the same successors, i.e., $\suc_P(u) = \suc_P(v)$ for all $u,v\in M$.
Note that an up-module must be an antichain.
If $M$ is an up-module, then we write \emph{$\suc_P(M)$} to denote the
set of successors of the elements of $M$.

\begin{lemma}
    \label{le:box}
    Let $P$ be a poset.  If $M$ is an up-module of $P$, then $|M|\le \dmap(P)$.
\end{lemma}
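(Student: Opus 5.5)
We construct a single plane map on which every element of $M$ is tight; this gives $\dmap(P)\ge |M|$ directly. The idea is to mimic the map of an antichain, where all elements can be tight at once. We place the elements of $M$ on an anti-diagonal so that they are pairwise incomparable in dominance, put all of $\suc_P(M)$ in the region dominating every point of $M$, and put every remaining element where it dominates no element of $M$.

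Write $M=\{m_1,\dots,m_r\}$ and $S=\suc_P(M)$. Because $M$ is an up-module, every element of $S$ is a strict successor of every $m_i$. Moreover, $M\cap S=\emptyset$, since $M$ is an antichain: if $m_j\in S$, then $m_j>m_j$, which is impossible.

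\begin{itemize}
\item For $i\in[r]$, set $\mu(m_i)=(i,r+1-i)$. These points lie on an anti-diagonal, so no $m_i$ dominates another.
\item Place the elements of $S$ at distinct points with both coordinates larger than $r+1$. Each such point dominates every $\mu(m_i)$.
\item Place every element of $P\setminus(M\cup S)$ at a distinct point with negative first coordinate. Such a point has first coordinate smaller than that of every $\mu(m_i)$, so it dominates no $\mu(m_i)$.
\end{itemize}
The map is injective because the three regions are disjoint and points within each region are chosen distinct.

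We now check that each $m_i$ is tight. The points dominating $\mu(m_i)$ are $\mu(m_i)$ itself and exactly the points of $S$:
\begin{itemize}
\item no other $\mu(m_j)$ dominates $\mu(m_i)$, by the anti-diagonal placement;
\item every point of $S$ dominates $\mu(m_i)$;
\item no other element dominates $\mu(m_i)$, since its first coordinate is too small.
\end{itemize}
This set is $\{m_i\}\cup S=\suc_P[m_i]$, so $m_i$ is tight.

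The construction involves no real obstacle. The one point needing care is that the up-module property gives exactly $\suc_P(m_i)=S$ for every $i$, with no extra successors for any individual $m_i$; this is what allows a single region for $S$ to serve all elements of $M$ simultaneously.
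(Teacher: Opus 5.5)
Your proof is correct, but it takes a different and more elementary route than the paper.

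The paper does not build a map from scratch. It starts from a map $\mu$ on which one element $v\in M$ is already tight. It then uses \Cref{lem:extensions-on-coordinates} to assume that both coordinate projections are linear extensions of $P$ with integer coordinates. Finally, it moves the elements of $M$ onto a short segment of slope $-1$ through $\mu(v)$. The integrality and distinctness of the coordinates guarantee that every point of that segment compares with the remaining points exactly as $\mu(v)$ does. Hence every point of the segment is dominated precisely by the images of $\suc_P(v)$.

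You instead write down an explicit map: an anti-diagonal for $M$, a common upper-right quadrant for $S=\suc_P(M)$, and a region of small first coordinate for everything else. As a result you need neither the normalization lemma nor an initial map on which some $v$ is tight. Your verification is a three-case check that uses exactly the two facts that matter: $\suc_P[m_i]=\{m_i\}\cup S$ for every $i$, and the elements of $M$ are placed pairwise dominance-incomparable.

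The only extra feature of the paper's version is that it is a local modification of an existing map, since only elements of $M$ move. The lemma as stated does not need this.
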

\begin{proof}
  Let $\mu$ be a map where some element $v\in M$ is tight.
  By Lemma~\ref{lem:extensions-on-coordinates} we can assume that the
  coordinate projections of $\mu$ are linear extensions of $P$.
  We can also assume that all coordinates are integers. Let $\ell$ be the line of
  slope $-1$ through $\mu(v)$ and let $s$ be a segment of length 1 on $\ell$ which
  contains $\mu(v)$. Note that our assumptions about $\mu$ imply that
  $\suc_P(M)=\suc_P(v) = \{ w\in P : x <_\mathrm{dom} \mu(w) \}$ for every point $x$ on $s$.
  Hence, if we place the elements of $M$ at distinct points of $s$ we create a map
  where all elements of $M$ are tight.
\end{proof}

Observe that the set $\Max(P)$ is an up-module.
Hence, \cref{le:box} implies the following statement.

\begin{corollary}
    \label{co:sinks}
    For every poset $P$, we have $|\Max(P)| \le \dmap(P)$.
\end{corollary}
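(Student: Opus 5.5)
This follows from \cref{le:box} once we check that $\Max(P)$ is an up-module; this is the observation made just before the statement. By definition, an up-module is a set $M$ with $\suc_P(u)=\suc_P(v)$ for all $u,v\in M$, where $\suc_P(u)$ is the \emph{open} successor set. If $u$ is maximal in $P$, then no element lies strictly above it, so $\suc_P(u)=\emptyset$. Hence every element of $\Max(P)$ has the empty set as its open successor set, and these sets all coincide. The set $\Max(P)$ is also an antichain, which agrees with the remark that up-modules are antichains.

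There is one degenerate case: $P$ empty, where $\Max(P)=\emptyset$ and the bound is trivial. For nonempty $P$, the set $\Max(P)$ is nonempty, so \cref{le:box} applies with $M=\Max(P)$ and gives $|\Max(P)|\le\dmap(P)$.

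None of these steps is hard. If I were proving it without quoting \cref{le:box}, the only real work would be in that lemma's proof, which needs two things. First, a map in which some maximal element $v$ is tight; such a map exists by \cref{lem:two-chains} applied to the chain $\{v\}$. Second, enough room near $\mu(v)$ to place all maximal elements on a short segment of slope $-1$ without creating new dominances. Since every maximal element has no successors, each one just needs a position that no other point dominates. It is simpler still to put all maximal elements on a far-out anti-diagonal line above and to the right of everything else, because any point on such a line dominates nothing there. Since this is all already covered by \cref{le:box}, the corollary is immediate.
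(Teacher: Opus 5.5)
Your proposal is correct and matches the paper's argument: $\Max(P)$ is an up-module because every maximal element has an empty open successor set, so \cref{le:box} applied to $M=\Max(P)$ gives the bound. Your side remark is also a valid self-contained proof. Placing all maximal elements on an anti-diagonal beyond every other point makes each of them tight, because no point of the map dominates any of them.
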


Given two positive integers $a$ and $k$, ${}^{k}a$ is a tower of $a$'s of height $k$.
For example, $^{3}2=2^{2^2}=16$. More formally, ${}^1a=a$ and, for any positive integer $i$, ${}^i a=a^{({}^{i-1} a)}$. We extend this notation and write ${}^{k}_{b}a$ to denote a tower of total height $k$ where the top element is $b$ and the remaining ($k-1$) elements are $a$'s.
For example, $^{3}_42=2^{2^4}=2^{16}$. 

We prove the following theorem with respect to the function $f(d) = {}^{d}_d4$.

\thmsizetodmap*
\label{thm:size-dmap*}

\begin{proof}
Let $h$ be the height of $P$, and let $d = \dmap(P)$.
The \emph{canonical antichain partition} of~$P$ is the partition of the elements of
$P$ into $h$ antichains.  Deviating from the standard definition we
construct the canonical antichain partition top-down instead of bottom-up.
Let $A_{1}=\Max(P)$ and, for each
$k\in [h]\setminus\{1\}$, let $A_k$ be the set of maximal elements
of the induced poset $P \setminus \bigcup_{j=1}^{k-1} A_j$.
We will identify a function~$f$ such that,
for every $k \in [h]$, $|A_k| \le f(k)$.
The function~$f$ will grow very fast. 
In particular, for every $k \ge 2$, we will have
\begin{equation}
  \label{eqn:growth}
  f(k) \ge 3\sum_{j=1}^{k-1} f(j) \quad\text{and}\quad
  d \le 2^{(2/3)f(k-1)}. \tag{$\star$}
\end{equation}
Since $A_1 = \Max(P)$, we obtain from~\cref{co:sinks} that
$|A_1| \le d$.  We set $f(1) = d$.  Note that this yields
$|A_1| \le f(1)$ and fulfills the right inequality
in~\eqref{eqn:growth} for every $k \ge 2$.  (For $k=2$, this is due to
the fact that $x \le 2^{{2}x/{3}}$ holds for every $x>0$.  For $k>2$,
this is due to the fact that $f$ grows.)

Now consider $k \in [h] \setminus \{1\}$, and let $\cal M$ be a
partition of $A_k$ into maximal up-modules.
By~\cref{le:box}, for every $M \in \cal M$, we have $|M| \le d$.
In particular, $|A_k| \le d \cdot |\cal M|$. A~module $M \in \cal M$ 
is determined by $\suc_P(M)$, which is a subset of $\bigcup_{j=1}^{k-1} A_j$.
Due to \eqref{eqn:growth}, i.e., the fast growth of~$f$, we have
\[
  \left|\bigcup_{j=1}^{k-1} A_j\right| \le \sum_{j=1}^{k-1} f(j)
  = f(k-1) + \sum_{j=1}^{k-2} f(j) \le \frac{4}{3}f(k-1).
\]
This implies that there are at most $2^{(4/3)f(k-1)}$ choices for $\suc_P(M)$.
Again using \eqref{eqn:growth}, we get
\[
|A_k| \le d \cdot 2^{(4/3)f(k-1)} \leq 2^{(2/3)f(k-1)}2^{(4/3)f(k-1)}
= 2^{2f(k-1)} = 4^{f(k-1)}.
\]
If we now set $f(k)= {}^k_d4$, then \eqref{eqn:growth} is fulfilled
for $k \ge 2$, and $|A_k| \le f(k)$ for $k \in [h]$ (including $k=1$).
Using \eqref{eqn:growth} and \cref{le:height}, we finally get
\[
|P| \le \sum_{j=1}^{h} f(j) \le \frac{1}{3}f(h+1) \le f(d) = {}^d_d4. \qedhere
\]
\end{proof}

\section{Open Problems and Future Directions}

We have introduced the new concepts of mapability and atlas thickness, and we conclude by highlighting several open problems for future investigation.

\begin{enumerate}
\item If we embed into $\mathbb{R}$ instead of $\mathbb{R}^2$, we just get the width of the poset. Is it interesting to consider higher dimensions?

\item We have proved that, for every poset $P$, $\dim(P) \leq 2 \at(P)$ and that there exist posets (incidence posets of complete graphs) with the atlas thickness huge relative to their dimension. 
However, we did not find a family of posets with constant dimension and unbounded atlas thickness.
On the other hand, it is well-known that the Boolean dimension of incidence posets of complete graphs is bounded, thus our family of posets \textit{separates} Boolean dimension and atlas thickness.
It would be interesting to understand if dimension and atlas thickness are functionally equivalent or not.

\item The following class of parameters of posets may be worth studying: For a given poset~$P$ and positive integer~$k$, find the minimum $t(k)$ such that there exists a family $\cal R$ of linear extensions
with:
\begin{itemize}
    \item the size of $\cal R$ is at most $k\cdot\dim(P)$,
    \item for each $v$ in $P$, there is a subfamily $R(v) \subseteq \cal R$ of size at most $t(k)$ such that $\suc_P[v] = \{w\in P \colon  v \leq_L w$ for every $L \in R(v) \}$ (we could say that $v$ is tight in~$R(v)$).
\end{itemize}
The motivation for the definition is that we obtain a data structure that allows us to answer comparison queries ($<$, $\parallel$, $>$) with space requirement 
$O(k \cdot \dim(P) \cdot n)$ and query time $O(t(k))$.
\end{enumerate}

\bibliographystyle{plainurl}
\bibliography{dominance}

\end{document}